\newcommand{\norm}[1]{\left\Vert#1\right\Vert}
\date{\today}
\keywords{}
\author{Bertrand Deroin}
\thanks{ }
\address{CNRS -- Laboratoire AGM -- Universit\'e de Cergy-Pontoise}
\email{bertrand.deroin@u-cergy.fr}
\title[Locally discrete expanding groups]{Locally discrete expanding groups of analytic diffeomorphisms of the circle}
\newtheorem{prop} {Proposition} [section]
\newtheorem{definition}[prop] {Definition}
\newtheorem{lemma}[prop] {Lemma}
\newtheorem{corollary}[prop]{Corollary}
\newtheorem{proposition}[prop]{Proposition}
\theoremstyle{remark}
\newtheorem{remark}[prop]{Remark}
\def\S{S}
\begin{document}

\begin{abstract} We show that a finitely generated subgroup of \(\text{Diff}^\omega ({\bf S}^1) \) that is expanding and locally discrete in the analytic category  is analytically conjugated to a uniform lattice in \( \widetilde{\text{PGL}}^k _2({\bf R}) \) acting on the \(k\)-th covering of \({\bf R}P^1\) for a certain integer \(k>0\).
\end{abstract}

\maketitle


\section{Introduction} In the study of the dynamics of finitely generated groups acting by analytic diffeomorphisms on the circle (or more generally in analytic unidimensional dynamics) the dichotomy discreteness versus non discreteness is very useful and important. Many interesting dynamical properties can be easily established in the non locally discrete regime, for instance concerning ergodicity or minimality of the action. However in the locally discrete regime things are not completely understood yet, even if a conjectural classification is expected, see e.g. the survey \cite{DFKN}.
   
The goal of this note is to provide such a classification under the additional assumption that the action is expansive, as announced in \cite{DFKN}. Expansive means that for every point of the circle there exists an element of the group whose derivative at that point is greater than \(1\) in modulus. Our main result (Corollary \ref{c: classification}) shows that up to analytic conjugacy, only cocompact lattices of the finite cyclic coverings of \(\text{PGL}_2 ({\bf R}) \) acting on the corresponding finite cyclic covering of the real projective line \({\bf R}P^1\) are at the same time expansive and locally discrete in the analytic category. The precise definitions of expansiveness and local discreteness in the analytic category are exposed in sections \ref{s: expanding property} and \ref{s: local discreteness} respectively.


This result is part of a more general result concerning the dynamics of pseudo-groups of holomorphic maps on Riemann surfaces having both local discreteness and hyperbolicity properties. However, its proof in the particular case of the circle group action is considerably simpler (essentially because of the use of a combination of the convergence group theorem by Gabai \cite{Gabai} and Casson-Jungreis \cite{CJ}, and of the differentiable rigidity theory of Fuchsian groups by Ghys \cite{Ghys_rigidite}), and deserves a special interest for the theory  of circle group actions. 

\vspace{0.5cm} 

{\bf Organisation of the paper.} Section \ref{s: geometric group theory} is devoted to review some aspects of the theory of hyperbolic groups that will be needed in our argument. In sections \ref{s: expanding property} and \ref{s: distortion} (resp. \ref{s: local discreteness}) we present the definition of expansiveness (resp. local discreteness) that is assumed in our main result. Section \ref{s: convergence} is devoted to the main technical tool of our method, namely the convergence property of the lines of expansion. The last three sections \ref{s: hyperbolicity}, \ref{s: Fuchsian group} and \ref{s: differentiable rigidity} are devoted to the proof of our main result: the Corollary \ref{c: classification}. 

\vspace{0.5cm}

{\bf Acknowledgments --} I express my gratitude to the team \cite{AFKMMNT} who encouraged me to write this note. I particularly thank Michele Triestino for his careful reading. 

\section{Index of notations}

We denote by greek letters the distances in the Cayley graphs we consider.

\begin{itemize}
\item \({\bf S}^1={\bf R}/{\bf Z}\)
\item \( x, y , \ldots\) points of \({\bf S}^1\)
\item \(B(x,r)\) : ball of radius \(r\) centered at \(x\) in \({\bf C}/{\bf Z}\) for the euclidean metric
\item \(G\) subgroup of \(\text{Diff}^\omega({\bf S}^1)\) 
\item \(\S\) symmetric set of generators of \(G\) 
\item \(\norm{g}\) minimal number of generators needed to write \(g\)
\item \(\delta\) constant of hyperbolicity for the Cayley graph associated to the pair \((G,S)\)
\item \(f,g,\ldots\) elements of \(G\)
\item \(s\) element of \(\S\)
\item \( \{ E_m^x\} _{m\geq 0}\) line of expansion at \(x\) 
\item \( x_m= E_m^x (x) \)
\item \(Dg\) derivative of the map \(g\) 
\item \(D_x : G\times G\rightarrow {\bf R}\) derivative cocycle
\item \(\kappa(g,E)\) distortion of the map \(g\) on the set \(E\) 
\item \(a,b \) constant appearing in Lemma \ref{l:discreteness} (controlling size of derivatives) 
\item \( \underline{c} >0 \) constant of uniform expansion of derivatives along the set \(S\)
\item \( \overline{c}>0\) constant controlling logarithms of derivatives of elements of \(S\), 
\item \(\gamma_i\)'s constant of quasi-isometry appearing in Proposition \ref{p:line of expansion}
\item \(\Omega : {\bf S}^1\rightarrow \partial G\) equivariant map defined by equation \eqref{def: map to boundary}
\item \( \phi : \partial G\rightarrow {\bf R}P^1\) map that is \(\rho\)-equivariant
\item \(\Phi = \phi \circ \Omega\)
\item for every positive integer \(k\), \(\widetilde{\text{PGL}_2^k({\bf R})} \) is the \(k:1\) cyclic covering of \( \text{PGL}_2({\bf R})\), acting on  the \(k:1\) cyclic covering \(\widetilde{{\bf R}P^1}^k\) of \({\bf R}P^1\)
\item \( U\subset {\bf S}^1 \times {\bf R} P ^1 \) complement of the graph of \(\Phi\) 
\item \(K\subset U\times {\bf R}\) compact subset defined by equation \eqref{eq: compact set}
\item \( (x,z,t)\) coordinates of a point in \( U\times {\bf R}\)
\item \( \Delta\subset U\times {\bf R}\) fundamental domain for the \(G\)-action on \(U\) defined by equation \eqref{eq: action}
\item  \(M\) quotient of \(U\) by \(G\) 
\item \(V=\frac{\partial}{\partial t} \) vector fields on \(M\)
\item \(\mathcal F^\pm \): weak stable foliations of \(V\) on \(M\) 
\item \(D\) developing map
\end{itemize}

\section{Preliminaries of geometric group theory} \label{s: geometric group theory}

We review some notions of geometric group theory that will be useful for our argument. 

Let \(G\) be a finitely generated group. Given a finite symmetric generating subset \(S\subset G\), we associate the norm \(\norm{g}\) of an element \(g\in G\) as being the minimum number of elements of \(S\) that is needed to write \(g\).  

The {\bf Cayley graph} of the pair \((G,S)\) is the non oriented graph whose vertices are the elements of \(G\) and the edges are the pairs \( \{g, sg\}\) with \(g\in G\) and \(s\in S\). The group \(G\) acts naturally on its Cayley graph by right multiplications. The combinatorial distance \(d\) associated to this graph -- namely, the one defined as the minimum number of edges one has to cross to go from a vertex to another one -- is given by the formula \( d(g_1,g_2) := \norm{g_2 g_1^{-1}}\). Another finite symmetric generating set gives rise to another graph whose set of vertices is \(G\), for which the identity map is {\bf bi-Lipschitz} with respect to the associated distances.

The group \(G\) is called {\bf hyperbolic} if its triangles are thin, in the sense that there exists a constant \(\delta>0\) such that for any triple of points \( g_1,g_2,g_3\in G\), and any collections of geodesics \([g_1,g_2]\), \([g_2,g_3]\) and \([g_3,g_1]\) between these points, we have that 
\begin{equation} \label{eq: delta thin} [g_1,g_3 ] \subset [g_1,g_2]^\delta \cup [g_2,g_3]^\delta \end{equation}
where \(A^\delta\) is the set of points at a distance from a point of \(A\) less than \(\delta\). A triangle in a graph satisfying an inequality such as \eqref{eq: delta thin} is called {\bf \(\delta\)-thin}.

A {\bf geodesic ray} parametrized by an interval \(I\subset {\bf Z}\) is a sequence \( \{g_n\}_{n\in I}\) of elements of \(G\) such that \(d (g_k, g_l) =|k-l|\) for every \(k,l\in I\). The set of equivalence classes of geodesic rays parametrized by \({\bf N}\) up to bounded Hausdorff distance, is the geometric boundary of \(G\), and is denoted by \( \partial G\). It is equipped with the quotient of the topology of simple convergence. In the case where \(G\) is Gromov hyperbolic, this topological space is a compact metric space. Moreover, in this latter case, the group \(G\) acts naturally on its boundary by homeomorphisms, and the action is minimal unless \(G\) is virtually cyclic.

Given constants \(\alpha \geq 1\) and \(\beta\geq 0 \), an \((\alpha ,\beta)\)-{\bf quasi-geodesic} on \(G\) is a sequence \(\{g_n\}_{n\in {\bf N}} \) of elements of \(G\) such  that for every \(m,n\in {\bf N}\), we have \begin{equation}\label{eq: quasi-geodesic} \alpha ^{-1} |n-m| - \beta \leq d(g_m,g_n) \leq \alpha |n-m| +\beta.\end{equation}
We recall that there exists a constant \(\eta\) such that two \( (\alpha, \beta) \)-quasi-geodesics having the same extremities are bounded appart in Hausdorff distance by less than \(\eta\). As a consequence of this, up to enlarging the constant  \(\delta\), any \((a,b)\)-quasi-geodesic triangle is \(\delta \)-thin. 

The {\bf Gromov product} is defined by
\begin{equation}\label{eq: Gromov product} (h_1,h_2)_g := \frac{d(g,h_1) + d(g,h_2) - d(h_1,h_2)}{2} , \end{equation}
for every \(g, h_1, h_2\in G\). 
Its geometrical significance in a Gromov hyperbolic graph is that, up to some constant depending only on the constants of hyperbolicity of the graph, the Gromov product \( (h_1,h_2) _g\) is the distance from \(g\) to a (any) geodesic between \(h_1\) and \(h_2\). Given two points \(p,q\in \partial G\), this Gromov product can be extended to points of the boundary of \(G\) by the following formula 
\begin{equation} \label{eq: Gromov product boundary} (p,q) _g := \sup _{(h_m)_m, \ (k_n)_n} \ \limsup _{m,n\rightarrow \infty} \ (h_m, k_n)_g ,\end{equation}
where the first supremum is taken over all geodesics \( \{h_m\}_{m\in {\bf N}}\) and \( \{k_n\}_{n\in {\bf N}}\) that tend to \(p\) and \(q\) respectively. In fact, if \((h_m)_m\) and \((k_n)_n\) are geodesics tending to \(p\) and \(q\) respectively, then  if \(m,n\) are sufficiently large, the quantity \( (h_m, k_n)_g\) differs from  \( (p,q)_g\) by an additive term which is bounded by a constant that depends only on the hyperbolicity constants of \(G\). Then two points \(p,q\in \partial G\) are close to each other if and only if the Gromov product \( (p,q)_e\) is large.

\section{Expanding property and derivative cocycle} \label{s: expanding property}

\begin{definition}
A subgroup \(G\subset \text{Diff}^\omega({\bf S}^1) \) is {\bf {\em expanding}} if for every point \(x\in {\bf S}^1\), there exists an element \( g\in G\) such that  \( \log | Dg(x) | >0\). 
\end{definition}

\begin{remark}\label{r: uniformity} By compacity of \({\bf S}^1\), if \(G\subset \text{Diff}^\omega({\bf S}^1) \) is a subgroup which is expanding, there exists a finite subset \(S\subset G\) and a constant \( \underline{c} >0\) such that for every point \(x\in {\bf S}^1\) there exists \(s\in S\) such that \(\log |Ds (x)|\geq \underline{c} \).  If \(G\) is finitely generated, we can assume furthermore that \(S\) is a symmetric generating set. \end{remark}

\begin{definition}
If \(G\) is a subgroup of \(\text{Diff}^\omega ({\bf S}^1)\), for every \(x\in {\bf S}^1\), we define the {\bf {\em derivative cocycle}} \(D_x\) by the formula
\begin{equation}\label{eq: derivative cocycle} D _x (g_1,g_2) := \log |Dg_2 (x) |- \log|Dg_1(x)| = \log |D(g_2\circ g_1^{-1} ) (g_1(x) )|  .\end{equation}
\end{definition}

\begin{remark}\label{r: equivariance} The derivative cocyle satisfies the obvious equivariance 
\begin{equation} \label{eq: equivariance}  D _{g(x)} (g_1\circ g^{-1}, g_2\circ g^{-1} ) = D_x (g_1, g_2) . \end{equation}
for every \(x\in {\bf S}^1\), and \(g,g_1, g_2\in G\). 

Moreover, if \(G\) is generated by a finite symmetric subset \(S\), the derivative cocycle satisfies 
\begin{equation}\label{eq: bound cocycle} D_x (g_1, g_2) \leq \overline{c} \ d( g_1, g_2) , \end{equation} where  \begin{equation}\label{eq: max expansion} \overline{c} := \max_{x\in {\bf S}^1 , s\in S} \log |Ds(x)| .\end{equation} 
\end{remark} 

\begin{definition}\label{def: line of expansion} Let \(G\subset \text{Diff}^\omega({\bf S}^1)\) be a finitely generated subgroup,  \(S\subset G\) a finite symmetric generating subset, \(c >0\),  and \(x\in {\bf S}^1\). A \( c \)-{\bf {\em line of expansion}} relative to the point \(x\) is a sequence \(\{E_n^x \}_{n\geq 0} \) of elements of \(G\) such that for every \(n> 0\) one can write \( E_n^x= s_n^x E_{n-1}^x\) for some \(s_n^x\in S\), and for every \(0\leq m\leq n\) we have 
\begin{equation}\label{eq: line of expansion} D_x ( E_{m}^x, E_n^x) \geq c (n-m) .\end{equation}
\end{definition}

\begin{remark} Observe that if \(S\) and \(\underline{c} \) are given by Remark \ref{r: uniformity}, for any \(0< c\leq \underline{c}\), for any point \(x \in {\bf S}^1\), and for every \(g\in G\), there exists a \( c\)-line of expansion relative to the point \(x\) starting at \(g\). In the sequel, we will choose either \(c=\underline{c}\) or \(c=\underline{c}/2\). \end{remark}

\begin{lemma}\label{l: quasi-geodesic}
Given a finitely generated group \(G\subset \text{Diff}^\omega({\bf S}^1)\), a finite symmetric generating subset \(S\subset G\), and a constant \(c >0\), there exists \(\alpha \geq 1\) such that  any \( c\)-line of expansion is a \( (\alpha, 0)\)-quasi-geodesic. 
\end{lemma}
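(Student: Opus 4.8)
The goal is to show that a $c$-line of expansion $\{E_n^x\}_{n\geq 0}$ is a $(\alpha,0)$-quasi-geodesic for a uniform $\alpha$ depending only on $G$, $S$, and $c$. Since consecutive terms satisfy $E_n^x = s_n^x E_{n-1}^x$ with $s_n^x\in S$, the upper bound $d(E_m^x,E_n^x)\leq |n-m|$ is immediate from the triangle inequality (so we get the upper bound even with $\alpha=1$, $\beta=0$). The whole content is therefore the lower bound $d(E_m^x,E_n^x)\geq \alpha^{-1}|n-m|$.

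For the lower bound the plan is to feed the defining inequality \eqref{eq: line of expansion} into the a priori bound \eqref{eq: bound cocycle} on the derivative cocycle. Concretely, assume $m\leq n$; then on one hand $D_x(E_m^x,E_n^x)\geq c(n-m)$, and on the other hand $D_x(E_m^x,E_n^x)\leq \overline{c}\, d(E_m^x,E_n^x)$, where $\overline{c}=\max_{x\in{\bf S}^1,s\in S}\log|Ds(x)|$ is finite because $S$ is finite and ${\bf S}^1$ is compact. Combining these two inequalities gives
\begin{equation*}
d(E_m^x,E_n^x)\geq \frac{c}{\overline{c}}\,(n-m),
\end{equation*}
so one may take $\alpha=\max(1,\overline{c}/c)$. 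Note we should also record that $\overline{c}>0$ when $G$ is infinite (which is the relevant case, and in any event if $\overline{c}\leq 0$ no line of expansion exists); this is harmless. There is no $\beta$ term needed, matching the statement.

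I do not anticipate a serious obstacle here: the lemma is essentially a bookkeeping consequence of the cocycle estimate already recorded in Remark \ref{r: equivariance}. The only mild point to be careful about is the direction of the cocycle inequality — \eqref{eq: bound cocycle} bounds $D_x(g_1,g_2)$ from above by $\overline{c}\,d(g_1,g_2)$, and $D_x(E_m^x,E_n^x)$ is nonnegative by the line-of-expansion hypothesis, so the chain of inequalities above is valid as written — and the observation that the constant $\alpha$ produced this way depends only on $c$ and on $\overline{c}$, hence only on the pair $(G,S)$ and $c$, which is exactly the uniformity claimed.
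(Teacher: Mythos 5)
Your proof is correct and follows essentially the same route as the paper: the upper bound is immediate from $E_n^x=s_n^xE_{n-1}^x$, and the lower bound comes from sandwiching $c(n-m)\leq D_x(E_m^x,E_n^x)\leq\overline{c}\,d(E_m^x,E_n^x)$ via \eqref{eq: line of expansion} and \eqref{eq: bound cocycle}. Your explicit choice $\alpha=\max(1,\overline{c}/c)$ is in fact the right one (the paper's stated $\alpha=c/\overline{c}$ is a typo, since one needs $\alpha\geq 1$ and the left-hand inequality reads $\alpha^{-1}|n-m|\leq d$).
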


\begin{proof} We clearly have \( d(E_m^x,E_n^x) \leq |m-n|\) for every \(m,n\) so the right hand side of \eqref{eq: quasi-geodesic} is satisfied with \(\alpha =1\) and \(\beta=0\). Moreover, for any \(m,n\in {\bf N}\) with \(m\leq n\), \eqref{eq: bound cocycle}, \eqref{eq: max expansion} together with \eqref{eq: line of expansion} show that 
\[  c(n-m)  \leq D_x (E_m^x, E_n^x) \leq \overline{c} \ d( E_m^x , E_n^x) \]
so that the left hand side of \eqref{eq: quasi-geodesic} is satisfied with \(\alpha = c/\overline{c} \) and \(\beta= 0\). 
 \end{proof}

\section{Bounded distortion along lines of expansion}\label{s: distortion}

\begin{definition} The {\bf {\em distortion}} of a differentiable analytic (resp. holomorphic) map \( g: U\rightarrow V\) between open subsets of \( {\bf S}^1\) (resp. \({\bf C}\) ) in restriction to a subset \( E\subset U\) is the quantity
\begin{equation} \label{eq: distortion} \kappa (g, E):= \max_{x,y\in E} \log \frac{| Dg (y) |}{|Dg(x) |}.\end{equation} 
\end{definition}

\begin{lemma}\label{l: distortion}
Let \( G \) be a subgroup of \(\text{Diff}^\omega ({\bf S}^1) \) generated by a finite symmetric set \(S\), and let \(c >0\) be some constant.  There exists \( r >0\) such that the following holds. For any \( c\)-line of expansion \( (E_n^x ) _{n\geq 0}\) relative to some \(x\in {\bf S}^1\), and for every \(n\in {\bf N}\), the element \( (E_n^x)^{- 1} \in G\) has a holomorphic univalent extension \(\widetilde{(E_n^x)^{-1}}\) to the ball \( B (x_n, r) \) of center \(x_n= E_n^x(x)\).
\end{lemma}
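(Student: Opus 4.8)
The plan is to exploit the fact that a $c$-line of expansion is a quasi-geodesic (Lemma \ref{l: quasi-geodesic}) together with the expansion condition \eqref{eq: line of expansion} in order to run a Koebe-type distortion argument in the complex domain. The key point is that, writing $E_n^x = s_n^x \cdots s_1^x E_0^x$ and hence $(E_n^x)^{-1} = (E_0^x)^{-1} (s_1^x)^{-1} \cdots (s_n^x)^{-1}$, we want to factor the inverse as a composition of maps each of which is a holomorphic contraction on a fixed complex neighborhood of the relevant point. Each generator in $S$ extends holomorphically and univalently to a uniform complex neighborhood of ${\bf S}^1$ (since $S$ is finite and each $s$ is analytic on the compact ${\bf S}^1$); fix $\rho>0$ so that every $s^{\pm 1}$ with $s\in S$ extends univalently to the $\rho$-neighborhood of ${\bf S}^1$ in ${\bf C}/{\bf Z}$.

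First I would set up the sequence of intermediate points $x_n = E_n^x(x)$, $x_{n-1} = E_{n-1}^x(x)$, etc., all lying on ${\bf S}^1$, and the sequence of balls $B(x_j, r_j)$ that we hope to pull back by $(s_{j+1}^x)^{-1}$ into $B(x_{j+1}, r_{j+1})$ --- wait, one must be careful with the direction: $(E_n^x)^{-1}$ sends $x_n$ back to $x$, and it equals $(E_{n-1}^x)^{-1}\circ (s_n^x)^{-1}$, so the natural thing is to build the extension inductively. The real mechanism is that \eqref{eq: line of expansion} forces $|D(E_n^x) (x)|$ to be exponentially large, so $|D((E_n^x)^{-1})(x_n)|$ is exponentially small; combined with bounded distortion (a Koebe/Schwarz argument on the round ball, using that the map is univalent there) this forces the image of a \emph{fixed-radius} ball $B(x_n,r)$ to have small diameter. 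The inductive claim would be: there exists $r>0$ (depending only on $G$, $S$, $c$, and $\rho$) such that $\widetilde{(E_n^x)^{-1}}$ is defined and univalent on $B(x_n, r)$, with image of diameter $\lesssim e^{-cn}\cdot(\text{const})$, hence in particular eventually much smaller than $\rho$, so that one can compose with the next generator's extension without leaving its domain of univalence. The base case $n=0$ is handled because $E_0^x$ is a single group element (in the application, $E_0^x = g$ ranges over a \emph{finite} relevant set, or more honestly one absorbs it: since the statement is for an arbitrary line of expansion starting anywhere, one should take $E_0^x$ to have a univalent extension on some ball whose radius may a priori depend on $E_0^x$ --- but re-reading, the cleanest route is to note that the quasi-geodesic property lets us reduce to lines of expansion with $E_0^x=e$, since $D_x(E_0^x,E_n^x)\ge c\,n$ alone already makes $\{E_n^x\}$ a quasigeodesic and controls $|D E_n^x(x)|$ from below independently of $E_0^x$).

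The cleanest self-contained argument I would actually write: for $m\le n$, \eqref{eq: line of expansion} with the pair $(m,n)$ gives $|D(E_n^x)(x)| \ge e^{c(n-m)} |D(E_m^x)(x)|$, and by the chain rule $D((E_n^x)(E_m^x)^{-1})(x_m) = e^{D_x(E_m^x,E_n^x)} \ge e^{c(n-m)}$; the inverse map $g_{m,n} := (E_m^x)(E_n^x)^{-1}$ sends $x_n\mapsto x_m$ and has derivative $\le e^{-c(n-m)}$ at $x_n$. Each $g_{m,n}$ is, by construction, a word of length $n-m$ in $S$, so it extends holomorphically to $B(x_n,\rho/2)$ provided each intermediate iterate stays inside the $\rho/2$-neighborhood --- and here the distortion estimate and the exponential contraction feed each other: Koebe's theorem on $B(x_n, \rho/2)$ (once we know univalence there) gives $\kappa(g_{m,n}, B(x_n,\rho/4)) \le C(\rho)$ uniformly, whence the image of $B(x_n,\rho/4)$ has diameter $\le C'(\rho)e^{-c(n-m)}$. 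Choosing $r = \rho/4$ and running induction on $n-m$: the image of $B(x_n,r)$ under the first $k$ letters has diameter $\le C' e^{-c(n-m)} \cdot e^{\overline{c}k}$ by \eqref{eq: bound cocycle}; since $k\le n-m$, this is $\le C' e^{(\overline c - c)(n-m)}$, which is \emph{not} automatically small --- so one instead runs the induction in the \emph{favorable} direction, pulling back from $x_m$ side, or one invokes the standard fact that a quasigeodesic word whose endpoint-to-endpoint derivative is exponentially small must have exponentially small derivative at an exponentially large proportion of its intermediate points, allowing a bootstrapping. The honest statement of the main obstacle: \textbf{the crux is controlling the intermediate iterates} --- showing that the orbit $x_n, (s_n^x)^{-1}(\text{nbhd of } x_n), \dots$ does not escape the uniform complex collar before the contraction kicks in. I expect this to be handled exactly as in Ghys's and Nakai's work on analytic pseudogroups: use \eqref{eq: line of expansion} for \emph{all} pairs $0\le m\le n$ (not just the extremes) to get that each tail $E_m^x \mapsto E_n^x$ contracts, run a Grönwall-type / telescoping estimate on the radii $r_j$ of the shrinking balls $B(x_j, r_j)$ with $r_{j-1}$ determined from $r_j$ by Koebe applied to the single generator $(s_j^x)^{-1}$, and check the resulting recursion has a solution bounded below by a positive constant because the product of the "good" factors $e^{-c}$ beats the polynomial Koebe losses. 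Once the radii are shown to stay $\ge r>0$, univalence of the composed extension on $B(x_n,r)$ is automatic, and the lemma follows.
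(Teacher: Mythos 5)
You have the right skeleton --- pull $B(x_n,r)$ back one generator at a time, using the per-step contraction $|D(s_m^x)^{-1}(x_m)|\leq e^{-c}$ coming from \eqref{eq: line of expansion} applied to consecutive indices --- but you stop exactly at the crux and leave it as an expectation (``I expect this to be handled exactly as in Ghys's and Nakai's work\dots''). Worse, the mechanism you propose for closing the recursion does not work as stated: if you control the distortion of each single generator by the \emph{universal} Koebe constant $\kappa$ of Lemma \ref{l:Koebe}, each step costs you a factor like $\tfrac{1}{2}e^{\kappa}$ against a gain of $e^{-c}$, and since $\kappa$ is universal while $c$ may be small, the radii $r_j$ in your recursion can shrink geometrically to $0$; the ``Koebe losses'' are exponential in the number of steps, not polynomial, and there is no reason for $e^{-c}$ to beat them. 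The missing idea is elementary but essential: for a \emph{fixed} analytic diffeomorphism, the distortion on a ball of radius $r$ centered on ${\bf S}^1$ tends to $0$ as $r\to 0$, uniformly in the center and (by finiteness of $S$) in the generator. So one chooses $r$ once and for all with $\kappa(\widetilde{s},B(y,r))\leq c$ for all $s\in S$ and $y\in{\bf S}^1$. Then $|D\widetilde{(s_m^x)^{-1}}|\leq e^{c}\cdot e^{-c}=1$ on all of $B(x_m,r)$, hence $\widetilde{(s_m^x)^{-1}}(B(x_m,r))\subset B(x_{m-1},r)$ --- the radius does not shrink at all, and the induction is immediate with the constant radius $r$. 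This is exactly what the paper does; no Koebe constant, no Gr\"onwall estimate, and no ``exponentially small derivative at exponentially many intermediate points'' bootstrapping is needed.

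Two smaller remarks. Your worry about the base case $E_0^x$ is legitimate: as literally stated the conclusion for $n=0$ would require $(E_0^x)^{-1}$ to extend to a ball of uniform radius, which fails for arbitrary $E_0^x$; the lemma is implicitly used (and proved) with the normalization $E_0^x=e$, i.e.\ for the word $(s_1^x)^{-1}\cdots(s_n^x)^{-1}$, and this is how you should state your induction. Also, the quasi-geodesic property (Lemma \ref{l: quasi-geodesic}) plays no role here; the only input from Definition \ref{def: line of expansion} you need is the consecutive-step estimate $D_x(E_{m-1}^x,E_m^x)\geq c$, so the detour through word metrics in your first paragraphs can be dropped entirely.
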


\begin{proof} Choose a real number $r>0$ small enough so that the following condition holds: every $s\in S$ extends as a univalent holomorphic map $\widetilde{s}$ defined on the \(r\)-neighborhood \( A_r\) of \({\bf S}^1\) in \({\bf C}/{\bf Z}\), and moreover for every \(y\in {\bf S}^1\)
\begin{equation}\label{eq: bound distortion} \kappa (\widetilde{s}, B(y,r)) \leq c.\end{equation}

Let \(0\leq m \leq n\) be an integer. Since \( (E_n^x ) _{n\geq 0}\) is a \(c\)-line of expansion at \(x\), we have that \( |D (s_m^x)^{-1} (x_m)| \leq e^{-c}\). So \eqref{eq: bound distortion} shows that for every \( y\in B(x_m,r)\) 
\[ |D (\widetilde{s_m^x)^{-1}} (y)| \leq e^c \cdot |D (s_m^x)^{-1} (x_m)|  \leq 1. \]
Hence, the ball \(B(x_m, r)\) is sent by \( \widetilde{(s_m^x)^{-1}}\) inside the ball \( B(x_{m-1} , r)\). 

Since \( (E_n^x)^{-1} := (s_1 ^x)^{-1} \ldots  (s_{n}^x)^{-1}\), it has a holomorphic extension to \(B(x_n,r) \), which maps this latter to \(B(x,r)\).\end{proof}

To end this paragraph, recall the following distortion estimate, due to Koebe \cite{Koebe}:

\begin{lemma}[Koebe] 
\label{l:Koebe}
There is a constant $\kappa>0$ such that the following holds. Let $z_1,z_2$ be points of ${\bf C}$, $ r >0$ be a positive real number, and $g$ be a univalent non constant holomorphic map from $ B(z_1,r) $ to $ {\bf C}$ sending $z_1$ to $z_2$. Then for every \(0\leq r' \leq r /2\), 
\[ \kappa (g, B(z_1, r') ) \leq \kappa.\] 
Moreover, $g(B(z_1,r' ))$ contains the ball $B(z_2, r' e^{-\kappa} |Dg(z_1)|)$.
\end{lemma}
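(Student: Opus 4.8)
The plan is to reduce the statement to the classical Koebe distortion theorem for normalized univalent functions, and then to exhibit an explicit value of $\kappa$.

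First I would normalize. Given $g$, $z_1$, $z_2$ and $r$ as in the statement, set
\begin{equation}\label{eq: koebe normalization}
f(w) := \frac{g(z_1 + rw) - z_2}{r\, Dg(z_1)}, \qquad w \in B(0,1)\subset {\bf C}.
\end{equation}
Then $f$ is univalent on the unit disc, $f(0) = 0$ and $Df(0) = 1$; that is, $f$ lies in the class $\mathcal S$ of schlicht functions. Affine changes of variable at the source and the target do not alter the distortion, so $\kappa(g, B(z_1,r')) = \kappa(f, B(0, r'/r))$; likewise $g(B(z_1,r'))$ contains $B(z_2,\rho)$ if and only if $f(B(0,r'/r))$ contains $B(0, \rho /(r|Dg(z_1)|))$. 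Since $0 \le r' \le r/2$ we have $\sigma := r'/r \in [0,1/2]$, so it suffices to find a universal $\kappa > 0$ such that every $f \in \mathcal S$ satisfies, for all $\sigma \in [0,1/2]$,
\begin{equation}\label{eq: koebe reduced}
\kappa(f, B(0,\sigma)) \le \kappa \qquad \text{and} \qquad f(B(0,\sigma)) \supset B(0, \sigma e^{-\kappa}).
\end{equation}

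Next I would invoke (or, for a self-contained treatment, derive from Gronwall's area theorem) the three classical facts about $\mathcal S$, writing $f(w) = w + a_2 w^2 + \cdots$: Bieberbach's bound $|a_2| \le 2$; the Koebe one-quarter theorem $f(B(0,1)) \supset B(0,1/4)$, obtained by applying $|a_2|\le 2$ to $\zeta \mapsto c\, f(\zeta)/(c - f(\zeta))$ for any omitted value $c$; and the Koebe distortion bounds
\begin{equation}\label{eq: koebe bounds}
\frac{1-|w|}{(1+|w|)^3} \ \le\ |Df(w)|\ \le\ \frac{1+|w|}{(1-|w|)^3}, \qquad w \in B(0,1),
\end{equation}
obtained by applying $|a_2|\le 2$ to $\zeta \mapsto \big(f(\tfrac{\zeta + w}{1 + \bar w \zeta}) - f(w)\big)\big/\big((1-|w|^2) Df(w)\big) \in \mathcal S$ — which bounds $\big| (1-|w|^2) f''(w)/f'(w) - 2\bar w\big|$ — and then integrating the real part of $f''/f'$ along the radius through $w$. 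From \eqref{eq: koebe bounds}, for $w_1, w_2 \in B(0,1/2)$,
\begin{equation}\label{eq: koebe 81}
\log \frac{|Df(w_2)|}{|Df(w_1)|} \ \le\ \log \frac{(1+\tfrac12)/(1-\tfrac12)^3}{(1-\tfrac12)/(1+\tfrac12)^3} \ =\ \log 81,
\end{equation}
so $\kappa(f, B(0,\sigma)) \le \kappa(f, B(0,1/2)) \le \log 81$ whenever $\sigma \le 1/2$. For the covering assertion, I would observe that for $0 < \sigma < 1$ the rescaled map $\zeta \mapsto \sigma^{-1} f(\sigma\zeta)$ again belongs to $\mathcal S$, so the one-quarter theorem gives $f(B(0,\sigma)) \supset B(0,\sigma/4)$. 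Since $\log 81 > \log 4$, the choice $\kappa := \log 81$ makes both assertions in \eqref{eq: koebe reduced} hold; undoing the normalization \eqref{eq: koebe normalization} then proves the lemma (any larger constant works as well).

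The only place where care is genuinely needed is the bookkeeping in the normalization \eqref{eq: koebe normalization} and in the Möbius change of variable behind \eqref{eq: koebe bounds}; there is no real obstacle, the single non-elementary ingredient being the area theorem underlying $|a_2| \le 2$, which is classical — so in practice one simply cites Koebe, as the author does.
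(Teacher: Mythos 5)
Your proof is correct: the normalization to the schlicht class, the distortion bounds $\frac{1-|w|}{(1+|w|)^3}\le|Df(w)|\le\frac{1+|w|}{(1-|w|)^3}$ evaluated at $|w|=1/2$ (giving the ratio $81$), and the rescaled one-quarter theorem together yield both assertions with the explicit constant $\kappa=\log 81$, since $e^{-\log 81}=1/81<1/4$. The paper offers no proof at all -- it simply cites Koebe -- and your argument is precisely the standard derivation of the classical result being invoked, so there is nothing to compare beyond noting that you have supplied the omitted details and an explicit admissible constant.
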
 

\section{Local discreteness} \label{s: local discreteness}
For the next definition, we think of  the circle as being the one dimensional submanifold \({\bf S}^1= {\bf R}/{\bf Z}\) contained in the Riemann surface \({\bf C}/{\bf Z}\).   

\begin{definition}
A subgroup \( G\subset \text{Diff} ^\omega ({\bf S}^1)\) is {\bf {\em non locally discrete in the analytic category at a point \(x\in {\bf S}^1\)}}, if there exists a neighborhood \(V\subset {\bf C}/{\bf Z}\) of \(x\) and a sequence \(\{g_n\}_{n\geq 1}\) of elements of \(G\setminus \{\text{id}\} \) that extend as univalent holomorphic maps from \(V\) to \({\bf C}/{\bf Z}\) and whose extensions to \(V\) tend uniformly to the identity on \(V\) when \(n\) tends to infinity. Otherwise, \(G\) is {\bf {\em locally discrete in the analytic category at the point \(x\)}}.
\end{definition}

We have the following simple characterization of the local discreteness of $G$ everywhere in the analytic category. For every $x\in {\bf C}/{\bf Z}$, and every non negative real number $r$, we denote by $B(x,r)$ the ball of radius $r$ centered at $x$ in ${\bf C}/{\bf Z}$, for the euclidean distance.

\begin{lemma} \label{l:discreteness}
Let  $G\subset \text{Diff}^\omega({\bf S}^1) $ be a finitely generated subgroup, which is locally discrete in the analytic category at every point. Let \(S\) be a finite symmetric generating set for \(G\).  Then, given numbers $r>0$ and $ a < b$, there is an integer $\gamma\in {\bf N}^*$ such that the following holds. Let $y\in {\bf S}^1$ and $f\in   G$. Suppose that \(f\)  can be extended as a univalent holomorphic map \(\widetilde{f}\) defined on the ball $B(y , r)$  that satisfies $ a \leq \log |D \widetilde{f}| \leq b$ on $B(y, r)$. Then $f$ is a composition of at most $\gamma$ elements of $S$, namely \( \norm{f} \leq \gamma\).    
\end{lemma}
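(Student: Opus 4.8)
The strategy is a standard compactness-plus-diagonal argument exploiting the hypothesis of local discreteness in the analytic category. I would argue by contradiction: suppose that for the given $r>0$ and $a<b$ no such integer $\gamma$ exists. Then for each $n\in{\bf N}^*$ there is a point $y_n\in{\bf S}^1$ and an element $f_n\in G$ with $\norm{f_n}\geq n$, such that $f_n$ extends to a univalent holomorphic map $\widetilde{f_n}$ on $B(y_n,r)$ with $a\leq\log|D\widetilde{f_n}|\leq b$ there. Since ${\bf S}^1$ is compact, after passing to a subsequence we may assume $y_n\to y_\infty\in{\bf S}^1$, and in particular all the balls $B(y_n,r/2)$ contain a fixed smaller ball $B(y_\infty,r')$ for some $r'>0$, on which all the $\widetilde{f_n}$ are defined and univalent.

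The next step is to extract a convergent subsequence of the $\widetilde{f_n}$. On $B(y_\infty,r')$ the maps $\widetilde{f_n}$ have derivatives bounded above and below in modulus by $e^b$ and $e^a$, and they are univalent; combining this with Koebe's estimate (Lemma \ref{l:Koebe}, or the usual Koebe distortion/covering theorem) one controls $|\widetilde{f_n}|$ on a slightly smaller ball, so the family $\{\widetilde{f_n}\}$ is uniformly bounded, hence normal by Montel's theorem. Passing to a further subsequence, $\widetilde{f_n}\to h$ uniformly on a ball $B(y_\infty,r'')$, where $h$ is holomorphic with $e^a\leq|Dh|\leq e^b$, so in particular $h$ is non-constant and univalent on a possibly smaller ball. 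The key observation now is that the $f_n$ are pairwise distinct for $n$ large (since $\norm{f_n}\to\infty$, only finitely many can equal any given group element), so the elements $g_n:=f_{n+1}\circ f_n^{-1}\in G$, or more simply $g_n := f_{m(n)}\circ f_n^{-1}$ along a suitable subsequence, are nontrivial and their holomorphic extensions converge uniformly to $h\circ h^{-1}=\mathrm{id}$ near the point $h(y_\infty)$. This exhibits $G$ as non locally discrete in the analytic category at $h(y_\infty)$, contradicting the hypothesis.

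I would be a little careful about two technical points, which I expect to be the only real obstacles. First, one must ensure the composition $f_{m(n)}\circ f_n^{-1}$ is actually defined and univalent on a \emph{fixed} neighborhood independent of $n$: this requires that $f_n$ maps a fixed small ball around $y_\infty$ into a region where $f_{m(n)}^{-1}$ is univalent, which one gets from the uniform convergence $\widetilde{f_n}\to h$ together with the Koebe covering part of Lemma \ref{l:Koebe} applied to $\widetilde{f_n}^{-1}$ to produce a uniform-size ball around $h(y_\infty)$ in the image. Second, one must guarantee the limit maps $g_n$ are genuinely nontrivial and converge to the identity rather than to some other element: nontriviality follows since infinitely many distinct group elements appear among the $f_n$, and choosing the pairing so that $f_n$ and $f_{m(n)}$ are close (which is automatic once both subsequential limits equal $h$) forces $g_n\to\mathrm{id}$. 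Once these points are handled the contradiction with local discreteness at $h(y_\infty)$ is immediate, which finishes the proof. Note that finite generation is used only to make the conclusion $\norm{f}\leq\gamma$ meaningful; the contradiction itself is with the analytic local discreteness assumption.
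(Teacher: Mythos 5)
Your proposal is correct and follows essentially the same route as the paper: argue by contradiction, use compactness of \({\bf S}^1\) and the two-sided derivative bounds to apply Montel's theorem and extract a uniformly convergent subsequence \(\widetilde{f_n}\to h\), then compose nearby terms to produce nontrivial elements converging to the identity, contradicting local discreteness. The only (cosmetic) difference is that the paper forms \(\widetilde{f_{k+1}}^{-1}\circ\widetilde{f_k}\), which converges to the identity on a fixed ball around the limit point \(y\) itself and so avoids the small extra check at \(h(y_\infty)\) that your ordering \(f_{m(n)}\circ f_n^{-1}\) requires.
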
 

\begin{proof} 
By contradiction, suppose that this is not true. Then there is a sequence $(f_k)_k$ of elements of $G$ and a sequence $(y_k)_k$ of points of ${\bf S}^1$ such that for every $k$,  $f_k$ has a holomorphic univalent extension \(\widetilde{f_k}\) defined on $B(y_k, r)$ whose derivative on this latter satisfies \(a \leq \log |D\widetilde{f_k}|\leq b\), and whose word-length tends to infinity. Taking a subsequence if necessary, we can suppose that $(y_k)_k$ converges to a point $y\in {\bf S}^1$ when $k$ tends to infinity. Then, the maps $\widetilde{f_k}$ are defined on the ball $B(y, r /2)$ when $k$ is large enough. Since logarithm of derivatives are bounded on \( B(y,r/2)\), Montel's theorem shows that, taking a subsequence if necessary, the maps $\widetilde{f_k}$ converge uniformly to a holomorphic univalent map $\widetilde{f}: B(y,r/4)\rightarrow {\bf C}/{\bf Z}$. In particular, the maps $\widetilde{f_{k+1} }^{-1} \circ \widetilde{f_k}$ are well-defined on $B(y, r/8)$ if $k$ is large enough, and converge to the identity in the uniform topology when $k$ tends to infinity. The discreteness assumption implies that for $k$ large enough, $f_{k+1} = f_k$; this contradicts the fact that the word-length of $f_k$ tends to infinity.  
\end{proof} 

The following result shows that for an expanding group of analytic diffeomorphisms of the circle, being locally discrete in the analytic category somewhere is the same notion as being locally discrete in the analytic category everywhere.

\begin{lemma}\label{l: locally discrete somewhere vs everywhere}
If a subgroup \(G\subset \text{Diff}^\omega ({\bf S}^1)\) is expanding, then it is either non locally discrete in the analytic category everywhere, or locally discrete in the analytic category everywhere. 
\end{lemma}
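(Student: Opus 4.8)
The plan is to exploit the expansiveness to "transport" non-local-discreteness from any one point to a large open set, and then to use a connectedness/compactness argument on the circle. Suppose $G$ is non locally discrete in the analytic category at some point $x_0\in{\bf S}^1$: there are a neighborhood $V_0\subset{\bf C}/{\bf Z}$ of $x_0$ and elements $g_n\in G\setminus\{\mathrm{id}\}$ extending univalently to $V_0$ with $\widetilde{g_n}\to\mathrm{id}$ uniformly on $V_0$. I would first observe that the set $N\subset{\bf S}^1$ of points at which $G$ is non locally discrete in the analytic category is open (by definition, if it holds at $x$ with witness neighborhood $V$, it holds at every point of $V\cap{\bf S}^1$), and similarly nonempty by hypothesis. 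It therefore suffices to show $N$ is closed; equivalently, that non-local-discreteness can be spread to every point of ${\bf S}^1$.

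The key mechanism is the following transport lemma: if $G$ is non locally discrete at $y$ and $h\in G$ is any element, then $G$ is non locally discrete at $h(y)$. Indeed, choose $r>0$ small enough that $h$ extends univalently to $B(y,r)$ and that the witness maps $\widetilde{g_n}$ are defined on $B(y,r)$ (shrinking $r$). Then $\widetilde{h}\circ\widetilde{g_n}\circ\widetilde{h}^{-1}$ is defined and univalent on a fixed ball $B(h(y),\rho)$ for $n$ large, represents the element $h g_n h^{-1}\in G\setminus\{\mathrm{id}\}$, and converges uniformly to the identity there by continuity of composition and inversion on univalent maps (Montel/Cauchy estimates). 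Hence the non-locally-discrete locus $N$ is $G$-invariant. Now use expansiveness: for the chosen generating set $S$ and constant $\underline c>0$ from Remark~\ref{r: uniformity}, every point of ${\bf S}^1$ lies in the expanding orbit structure, so in particular the $G$-orbit of $x_0$ accumulates on all of ${\bf S}^1$ — more precisely, the line of expansion relative to any point $x$ produces points $x_m=E^x_m(x)$ whose inverse branches $\widetilde{(E^x_n)^{-1}}$ (Lemma~\ref{l: distortion}) send a definite ball $B(x_n,r)$ onto $B(x,r)$ with uniformly bounded distortion (Koebe, Lemma~\ref{l:Koebe}); applying the transport lemma along these elements pushes $N$ across ${\bf S}^1$.

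To make the last point precise I would argue as follows. Fix any $y\in{\bf S}^1$ and a $c$-line of expansion $(E^y_n)$ at $y$ with $c=\underline c$. By Lemma~\ref{l: distortion} the map $(E^y_n)^{-1}$ extends univalently to $B(x_n,r)$ with $x_n=E^y_n(y)$, and by Koebe's distortion estimate $\widetilde{(E^y_n)^{-1}}(B(x_n,r/2))\supset B(y,\,r'e^{-\kappa}|D\widetilde{(E^y_n)^{-1}}(x_n)|)$, where $|D\widetilde{(E^y_n)^{-1}}(x_n)|=|DE^y_n(y)|^{-1}\leq e^{-cn}\to 0$; so $E^y_n$ maps a small ball around $y$ onto a ball around $x_n$ of definite size $r/2$, with bounded distortion. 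Choosing a subsequence so that $x_n\to z$ for some $z\in{\bf S}^1$ (by compactness) and noting that along a line of expansion consecutive points $x_n$ lie within $|D s|$-controlled distance, the points $x_n$ exhaust a neighborhood of $z$, and then an elementary covering argument — ${\bf S}^1$ is connected, $N$ is open, nonempty, and invariant under the (expanding, hence topologically "blowing up") dynamics — forces $N={\bf S}^1$. I expect the main obstacle to be exactly this last step: verifying that expansiveness guarantees enough orbit-recurrence/expansion to cover all of ${\bf S}^1$, rather than leaving an invariant gap; this is where one genuinely uses that the expansion is uniform (the constant $\underline c>0$ of Remark~\ref{r: uniformity}) together with the bounded-distortion control of Lemmas~\ref{l: distortion} and~\ref{l:Koebe}, so that the images $E^y_n(B(y,\epsilon))$ are balls of a fixed radius, not shrinking ones.
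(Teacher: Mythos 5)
Your reduction to showing that the non-locally-discrete locus $N$ is all of ${\bf S}^1$, and your transport lemma (conjugating the witness sequence by $h\in G$ to show $N$ is $G$-invariant), are both fine; the paper implicitly uses the same invariance for the complementary set. But the final step is a genuine gap, and you correctly sense it: the principle ``${\bf S}^1$ is connected and $N$ is open, nonempty and $G$-invariant, hence $N={\bf S}^1$'' is simply false. The complement of an exceptional (Cantor) minimal set is open, nonempty, $G$-invariant and proper, so connectedness buys you nothing. Likewise, your assertion that the $G$-orbit of $x_0$ accumulates on all of ${\bf S}^1$ presupposes minimality of the action, which is not known at this point of the paper (expansiveness alone does not exclude an exceptional minimal set a priori). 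The Koebe/distortion control along lines of expansion does show that $N$ contains an interval of definite length around each point $E^y_n(y)$ for $y\in N$ and $n$ large, but these points need not be dense, so this does not close the argument either.

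The paper's proof goes through the complementary set $\hat\Lambda$ of locally discrete points (closed, invariant), assumes it nonempty, and shows it is everything by handling exactly the two configurations your argument ignores: (i) $\hat\Lambda$ cannot contain a finite orbit, because at a hyperbolic periodic point one conjugates an element $g$ tangent to the identity by powers of the linearizable contraction $f$ and the renormalizations $f^{-n}\circ g\circ f^n$ converge to the identity, producing non-local-discreteness there; (ii) if $\hat\Lambda$ contains an exceptional minimal set $\Lambda$, Hector's theorem says the stabilizers of the gaps of $\Lambda$ are virtually cyclic, so every point of ${\bf S}^1\setminus\Lambda$ is locally discrete, whence $\hat\Lambda={\bf S}^1$. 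To repair your proof you would need some substitute for these two inputs; the dynamical dichotomy on minimal sets of circle actions (finite orbit / whole circle / Cantor set) is the missing organizing idea.
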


\begin{proof}
The set of points where the group is non locally discrete in the analytic category is obviously an open set. Its complement, the set where the group is locally discrete in the analytic category, is a closed invariant subset \(\hat{\Lambda}\). Suppose that \(\hat{\Lambda}\) is non empty. 

We claim that \(\hat{\Lambda}\) does not contain a finite orbit in view of the expansiveness assumption. Indeed, any point whose orbit is finite is a hyperbolic fixed point of some element of \(G\) which takes the form \(f:z\mapsto \lambda z\) with \(|\lambda|<1\) in some linearizing coordinates.  Since the group is not virtually abelian (by expansiveness) the stabilizer of \(z=0\) contains an element tangent to the identity at \(z=0\) of the form \(g: z\mapsto z + az^k + \ldots \) with \(a\neq 0\) and \(k\geq 2\). Now, we have for a positive integer \(n\)
\begin{equation} \label{eq: renormalization} f^{-n} \circ g\circ f^{n}=  z + a\lambda ^{(k-1)n} z^k +\ldots \end{equation}
which shows that there exists a neighborhood of \(z=0\) in \({\bf C}\) where \( f^{-n} \circ g\circ f^n \) converges to the identity when \(n\) tends to \(+\infty\). This shows that the finite orbits are contained in the non locally discrete part. 

Hence \(\hat{\Lambda}\) contains an exceptional minimal subset \(\Lambda\). However, a result of Hector \cite{Hector1} shows that the stabilizers of the components of the complement of \(\Lambda\) are virtually cyclic. Hence every point in the complement of the exceptional minimal set is locally discrete in the analytic category (in fact in the compact open topology).  In particular, the group is locally discrete everywhere, and the result follows.
\end{proof}

In view of this result, we will call an expanding group {\bf locally discrete in the analytic category} if it is locally discrete in the analytic category at some point, and hence at every point.



\section{Convergence of lines of expansion in the Cayley graph}\label{s: convergence}

\begin{proposition}\label{p:line of expansion}
Let \( G\subset \text{Diff}^\omega({\bf S}^1) \) be a finitely generated subgroup which is expanding and locally discrete in the analytic category. Let \(S\) be a finite symmetric subset of generators of \(G\), 
\[ \overline{c} := \sup  _{x\in {\bf S}^1, s\in S}\log  |Ds(x)| ,\]
and let \(c>0\) be some constant. Then, there are constants $\gamma_1,\gamma_2,\gamma_3>0$ such that the following holds.  Let $x\in {\bf S}^1$,  and $(E_m ^x )_m$, $(F_n^x)_n$ be \( c\)-lines of expansion relative to the point \(x\). Then for every non negative integers $n,m\geq \gamma_1 d( E_0^x , F_0^x ) + \gamma_2$,  such that
\begin{equation}\label{eq:line of expansion} D_x (E_m^x, F_n^x ) \leq c,\end{equation}
we have $d( E_m^x,F_n^x)  \leq \gamma_3$. \end{proposition}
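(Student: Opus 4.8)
The plan is to exploit the bounded-distortion machinery of Section~\ref{s: distortion} to convert the analytic hypothesis \eqref{eq:line of expansion} into an honest bound on the word-length $d(E_m^x,F_n^x)$ via Lemma~\ref{l:discreteness}. The key geometric idea is that a $c$-line of expansion $(E_m^x)_m$, read backwards from the index $m$, provides a univalent holomorphic extension of $(E_m^x)^{-1}$ to a ball $B(x_m,r)$ of \emph{fixed} radius $r$ (Lemma~\ref{l: distortion}), on which we moreover control the derivative precisely: writing $E_m^x=s_m^x\cdots s_1^x E_0^x$ and using \eqref{eq: line of expansion}, the derivative of the extension of $(E_m^x)^{-1}$ on $B(x_m,r)$ is $e^{O(1)}$ times $|D(E_m^x)^{-1}(x_m)|$, which is comparable to $e^{-D_x(E_0^x,E_m^x)}$ up to the additive constant coming from $\kappa(\widetilde{E_0^x},\cdot)$ and the Koebe distortion constant (Lemma~\ref{l:Koebe}). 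The same holds for $(F_n^x)^{-1}$ at $x_n=F_n^x(x)$.

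First I would fix $r>0$ as in Lemma~\ref{l: distortion} (valid simultaneously for both lines of expansion). Next, consider the element $g:=F_n^x\circ (E_m^x)^{-1}\in G$; by \eqref{eq: derivative cocycle} its log-derivative at $y:=E_m^x(x)=x_m$ is exactly $D_x(E_m^x,F_n^x)$, which by \eqref{eq:line of expansion} is at most $c$. I also need a \emph{lower} bound on this log-derivative: since $(F_n^x)_n$ is a line of expansion we have $D_x(F_0^x,F_n^x)\ge c\,n\ge 0$, and more importantly $D_x(E_m^x,F_n^x)=D_x(F_0^x,F_n^x)-D_x(F_0^x,E_m^x)\ge cn - \overline{c}\,d(F_0^x,E_0^x)-\overline{c}\,d(E_0^x,E_m^x)$; but this is not obviously bounded below. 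The cleaner route is: $(E_m^x)^{-1}$ extends univalently to $B(x_m,r)$ sending it into $B(x,r)$, and $F_n^x$ — equivalently $(F_n^x)^{-1}$ read backwards — extends univalently on a ball around $x_n$; I need the composition $g=F_n^x\circ(E_m^x)^{-1}$ to extend univalently on a \emph{definite-size} ball $B(x_m,r')$ with log-derivative in a \emph{definite} window $[a,b]$. For the upper bound $b$: on $B(x_m,r/2)$ the distortion of $\widetilde{(E_m^x)^{-1}}$ is $\le\kappa$ (Koebe), and $|D(E_m^x)^{-1}(x_m)|\le 1$, so the image $\widetilde{(E_m^x)^{-1}}(B(x_m,r/2))$ is contained in $B(x,r)$ with controlled size; composing with $\widetilde{(F_n^x)^{-1}}^{-1}=\widetilde{F_n^x}$ requires knowing this image lands inside the domain of the latter and estimating its derivative there — here \eqref{eq:line of expansion} gives $\log|Dg(x_m)|\le c$, and Koebe distortion on a suitable subball upgrades this to a uniform bound $\le b$ on $B(x_m,r')$. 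For the lower bound $a$: apply the symmetric argument to $g^{-1}=E_m^x\circ(F_n^x)^{-1}$ at $x_n$, noting $\log|Dg^{-1}(x_n)|=-D_x(E_m^x,F_n^x)\le 0$, wait — I instead need $\log|Dg(x_m)|$ bounded \emph{below}; this follows because $g^{-1}$ has log-derivative at $x_n=g(x_m)$ equal to $-D_x(E_m^x,F_n^x)$, and to bound $D_x(E_m^x,F_n^x)$ from above (already done, $\le c$) combined with a univalence-based lower bound: any univalent map on $B(x_m,r')$ whose image is controlled cannot have arbitrarily small derivative at the center relative to the geometry, but cleanly one uses that $g$ maps $B(x_m,r')$ into a ball of radius bounded below (Koebe, applied to $\widetilde{F_n^x}$ on the image of the first ball), forcing $\log|Dg(x_m)|\ge a$ for a uniform $a$. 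With $r'>0$, $a$, $b$ now fixed and independent of $x,m,n$, Lemma~\ref{l:discreteness} applied to $g$ and the point $x_m\in{\bf S}^1$ yields $\norm{g}\le\gamma$ for a uniform integer $\gamma$, i.e.\ $d(E_m^x,F_n^x)=\norm{F_n^x(E_m^x)^{-1}}=\norm{g}\le\gamma=:\gamma_3$.

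Finally, the hypothesis $n,m\ge\gamma_1 d(E_0^x,F_0^x)+\gamma_2$ is what guarantees that the above compositions are actually \emph{legal}: one needs $m$ (resp.\ $n$) large enough, in terms of $d(E_0^x,F_0^x)$, so that the contraction afforded by the line of expansion overcomes the initial discrepancy between $E_0^x$ and $F_0^x$ and places the relevant images inside the fixed-radius domains of definition — this is a quantitative bookkeeping of the constants $\kappa$, $\overline{c}$, $c$ and the geometry of ${\bf S}^1$. The main obstacle I anticipate is precisely pinning down this domain-of-definition bookkeeping: one must track how the fixed ball $B(x_m,r)$ pushed forward by $\widetilde{(E_m^x)^{-1}}$ shrinks or drifts, and verify that for $m,n\gtrsim d(E_0^x,F_0^x)$ it still lands inside the domain where $\widetilde{F_n^x}$ is defined with the derivative bounds intact; the expansiveness along both lines is what makes the required lower bound on $m,n$ linear in $d(E_0^x,F_0^x)$ rather than uncontrolled.
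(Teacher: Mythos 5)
Your architecture is exactly that of the paper's proof: extend \((E_m^x)^{-1}\) backwards to the fixed-scale ball \(B(x_m,r)\) via Lemma \ref{l: distortion}, control distortion with Koebe's Lemma \ref{l:Koebe}, use the threshold \(m\geq\gamma_1 d(E_0^x,F_0^x)+\gamma_2\) to guarantee that the exponentially small image of \(B(x_m,r/2)\) lands inside the domain of the extension of \(F_0^x(E_0^x)^{-1}\), push forward through \(\widetilde{F_n^x}\) realised as the inverse of the backward extension \(\widetilde{(F_n^x)^{-1}}\), and conclude by applying Lemma \ref{l:discreteness} to \(g=F_n^x\circ(E_m^x)^{-1}\) on a definite ball \(B(x_m,r')\). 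The bookkeeping you defer to the end is precisely the content of conditions \eqref{eq: condition x'} and \eqref{eq: condition y'} in the paper, and your identification of where the linear threshold in \(d(E_0^x,F_0^x)\) comes from is correct.

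The one step that does not go through as you wrote it is the lower bound \(a\leq\log|D\widetilde{g}|\) required by Lemma \ref{l:discreteness}. Your justification --- that \(g\) maps \(B(x_m,r')\) onto a region of size bounded below, forcing \(\log|Dg(x_m)|\geq a\) --- is circular: by Koebe the image of \(B(x_m,r')\) has diameter comparable to \(r'e^{D_x(E_m^x,F_n^x)}\), which is exactly the quantity you are trying to bound from below, and hypothesis \eqref{eq:line of expansion} only bounds it from above. Indeed, no argument can extract this lower bound from the hypotheses as literally stated: taking \(F_\bullet^x=E_\bullet^x\) and \(m\gg n\) both above the threshold, one has \(D_x(E_m^x,E_n^x)\leq -c(m-n)\leq c\) while \(d(E_m^x,E_n^x)\geq (c/\overline{c})(m-n)\) is unbounded. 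The statement has to be read with the two-sided hypothesis \(|D_x(E_m^x,F_n^x)|\leq c\) (which is what is available in every application, where \(n\) is chosen as the index at which the cocycle difference first changes sign); with that, \(\log|D\widetilde g|\in[-c-3\kappa,\,c+3\kappa]\) on \(B(x_m,r')\) and Lemma \ref{l:discreteness} applies. To be fair, the paper's own proof is equally silent on this point, so the defect originates in the source; but your proposed patch for it, as formulated, is not valid and should be replaced by the two-sided assumption.
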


\begin{proof} Moving \(x\) to \(E_0^x(x) \) if necessary, we can assume that \(E_0^x = e\) (see Remark \ref{r: equivariance}). We will then write \( g:= F_0^x \), \(y=g(x)\) and \( E_n^y :=F_n^x \circ g^{-1}\). The sequence \( \{E_n^y\}_{n\geq 0}\) is then a \(c\)-line of expansion relative to the point \(y\), which explains the notation. Observe then that \(\norm{g}= d( E_0^x , F_0^x ) \). We denote \(x_m:= E_m^x (x) \) and \(y_n:= F_n^x(x)=E_n^y (y)\).

The map \(\widetilde{(E_m^x)^{-1}}\) is defined on \(B(x_m,r)\), where \(r\) is the constant given by Lemma \ref{l: distortion}, and Koebe's Lemma shows 
\[ \kappa \left( \widetilde{(E_m^x)^{-1}}, B(x_m, r/2) \right) \leq \kappa,\]
and 
\[  \widetilde{(E_m^x)^{-1}}\left( B(x_m, r/2) \right)\subset B\left(x, \frac{e^\kappa r }{2|DE_m^x (x)|}\right)   .\]

Since the map \(g\) is a composition of \(\norm{g}\) elements of \(S\), it has a univalent holomorphic extension \(\widetilde{g}\) defined on the ball \(B(x, r e^{-(\norm{g}-1) \overline{c}})\), that takes values in the ball \( B(y,r)\). Koebe's Lemma shows 
\[ \kappa \left( \widetilde{g}, B\left(x, r e^{-(\norm{g}-1) \overline{c}}/2 \right) \right) \leq \kappa.\]
In order that the composition \( \widetilde{g} \circ \widetilde{ (E_m^x)^{-1}}\) be defined on \(B(x_m,r/2)\) with distortion bounded by \(2\kappa\), a sufficient condition is then 
\begin{equation}\label{eq: condition x} \frac{e^\kappa r} {2 |DE_m^x (x)|} \leq \frac{re^{-(\norm{g}-1) \overline{c}}}{2}, \end{equation}
or equivalently
\begin{equation}\label{eq: condition x'} \log  |DE_m^x (x)|\geq  (\norm{g}-1) \overline{c} +\kappa .\end{equation}
In this case we have for every \(r'\leq r/2\) 
\[ \widetilde{g} \circ \widetilde{ (E_m^x)^{-1}} \left(B(x_m, r') \right) \subset  B(y, e^{2\kappa}r'  |D (g\circ (E_m^x)^{-1})(x_m) | ),\]
because \((E_m^x)_m\) is a \(c\)-line of expansion. Observe that condition \eqref{eq: condition x'} is fulfilled if 
\begin{equation} \label{eq: condition x''} 
m\geq \gamma_1 \norm{g} +\gamma_2 \text{ where } \gamma_1 = \overline{c}/c , \ \gamma_2 =\frac{\kappa-\overline{c}}{c}.\end{equation}

Similarly, the map  \(\widetilde{(E_n^y)^{-1}}\) is defined on the ball of radius \(r\), and we have  
\[  B\left(y, \frac{e^{-\kappa} r}{2 |DE_n^y (y)|}\right) \subset \widetilde{(E_n^y)^{-1}}\left( B(y_n, r/2) \right)   \] 
and 
\[ \kappa \left( \widetilde{(E_n^y)^{-1}}, B(y_n, r/2) \right) \leq \kappa.\]
 Then the extension \( \widetilde{E_n^y} \) is well-defined and univalent on the ball  \( B\left(y, \frac{e^{-\kappa} r}{2 |DE_n^y (y)|}\right) \) and its distortion is bounded by 
 \[ \kappa \left( \widetilde{E_n^y}, B\left(y, \frac{e^{-\kappa} r}{2 |DE_n^y (y)|}\right) \right) \leq \kappa.\]
In particular, for every \(r'\leq r/2\), we have, under the condition 
\begin{equation}\label{eq: condition yy} e^{2\kappa}r'  |D (g\circ (E_m^x)^{-1})(x_m) |\leq \frac{e^{-\kappa} r}{2  |DE_n^y (y)| } ,\end{equation}
or equivalently 
\begin{equation} \label{eq: condition y'} 3\kappa  + D_x(E_m^x, E_n^y\circ g)  \leq \log (r/2r')   ,\end{equation}
that the composition  \( \widetilde{E_n^y} \circ \widetilde{g} \circ \widetilde{ (E_m^x)^{-1}}\) is defined on the ball \( B(x_m, r') \) and its distortion is bounded by \(3\kappa\). Condition \eqref{eq: condition y'} is satisfied if \( \log (r/2r') \geq 3\kappa +\overline{c}\).

With this choice of \(r'>0\), we can then apply Lemma \ref{l:discreteness} with  \(\gamma_3=\gamma\) to get the conclusion.\end{proof}

\section{Gromov hyperbolicity of \(G\)} \label{s: hyperbolicity}

\begin{proposition}\label{p: hyperbolicity}
Let \( G\subset \text{Diff}^\omega({\bf S}^1) \) be a finitely generated subgroup which is expanding and locally discrete in the analytic category. Then, \(G\) is Gromov hyperbolic.  
\end{proposition}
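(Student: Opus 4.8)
The plan is to verify Gromov hyperbolicity via a thin-triangles / quasi-geodesic criterion, using the convergence result of Proposition \ref{p:line of expansion} as the engine. First I would fix the symmetric generating set $S$ and the constant $\underline{c}>0$ supplied by Remark \ref{r: uniformity}, and work with $c=\underline{c}$; this guarantees that through every element $g\in G$ and over every point $x\in{\bf S}^1$ there passes a $c$-line of expansion starting at $g$. By Lemma \ref{l: quasi-geodesic} every such line is an $(\alpha,0)$-quasi-geodesic for a uniform $\alpha$, so lines of expansion are a plentiful supply of uniform quasi-geodesics in the Cayley graph. The key idea is that these quasi-geodesics \emph{converge}: two of them, issued over the same point $x$, that eventually reach a common ``derivative level'' (condition \eqref{eq:line of expansion}, $D_x(E_m^x,F_n^x)\le c$) must at that moment be within bounded Cayley distance $\gamma_3$ of each other, once the indices exceed the linear threshold $\gamma_1 d(E_0^x,F_0^x)+\gamma_2$.

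Next I would use this to build, for any two group elements $g,h\in G$, a uniform quasi-geodesic between them and show that all such quasi-geodesics are uniformly Hausdorff-close (the ``Morse-type'' property), which is equivalent to hyperbolicity for a geodesic metric space. Concretely: pick any point $x\in{\bf S}^1$; choose a $c$-line of expansion $(E_m)$ over $g(x)$-coordinates starting at $g$ and one $(F_n)$ starting at $h$, arranged over the \emph{same} base point via the equivariance \eqref{eq: equivariance}. Because both lines gain derivative at a definite linear rate while $\overline{c}$ bounds the loss along any path, there are indices $m,n$ comparable to $d(g,h)$ at which the ``meeting condition'' $D_x(E_m,F_n)\le c$ holds; Proposition \ref{p:line of expansion} then pins $E_m$ and $F_n$ to within $\gamma_3$. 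Concatenating the initial segment of $(E_m)$, a bounded-length path from $E_m$ to $F_n$, and the reversed initial segment of $(F_n)$ yields a $(\alpha',\beta')$-quasi-geodesic from $g$ to $h$ with constants independent of $g,h$. Running this construction with different base points $x$ (equivalently, different choices of the lines) produces a family of uniform quasi-geodesics; the convergence statement forces any two of them to stay within a bounded neighborhood of each other along their whole length, so all quasi-geodesics between $g$ and $h$ are uniformly close. This ``all quasi-geodesics between two points fellow-travel'' property, applied to the three sides of a triangle, gives the $\delta$-thin condition \eqref{eq: delta thin} for a uniform $\delta$, hence hyperbolicity.

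The main obstacle I anticipate is \emph{bookkeeping the meeting condition}: one must show that the derivative levels $D_x(E_m,\cdot)$ and $D_x(F_n,\cdot)$ actually cross within a window of indices that is linearly controlled by $d(g,h)$, without escaping the threshold $\gamma_1 d(E_0,F_0)+\gamma_2$ of Proposition \ref{p:line of expansion}; this requires playing the linear lower bound $D_x(E_m^x,E_n^x)\ge c(n-m)$ along a line against the linear upper bound \eqref{eq: bound cocycle}, $D_x(g_1,g_2)\le \overline{c}\,d(g_1,g_2)$, along a geodesic from $g$ to $h$, and choosing the indices so that the meeting happens past the threshold while the concatenated path length stays $O(d(g,h))$. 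A secondary subtlety is that a priori only quasi-geodesics of this special ``line of expansion'' type are controlled, so one should either quote the standard fact that if \emph{every} pair of points is joined by a uniform quasi-geodesic all of whose companions fellow-travel then the space is hyperbolic, or, more robustly, phrase the conclusion in terms of the Gromov product: show $(g_1,g_3)_e\ge \min\{(g_1,g_2)_e,(g_2,g_3)_e\}-\delta$ by feeding the three pairwise lines-of-expansion constructions into Proposition \ref{p:line of expansion}. Either route reduces hyperbolicity to the convergence proposition plus routine metric estimates.
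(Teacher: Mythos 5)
Your strategy --- manufacture explicit paths between arbitrary $g,h$ by running two lines of expansion over a common base point until Proposition \ref{p:line of expansion} forces them within $\gamma_3$ of each other, then deduce hyperbolicity from a fellow-travelling property of these paths --- is genuinely different from the paper's. The paper introduces the auxiliary directed graph $\Gamma^x$ whose edges are the pairs $(g_0,g_1)$ with $D_x(g_0,g_1)\ge \underline{c}/2$ and $d(g_0,g_1)\le 2$, feeds Proposition \ref{p:line of expansion} into Nekrashevych's hyperbolicity criterion for such graphs to conclude that $(G,d_{\Gamma^x})$ is hyperbolic, and finishes by checking that the identity $(G,d_{\Gamma^x})\to (G,d)$ is a quasi-isometry. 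Your route amounts to reproving the relevant part of Nekrashevych's theorem by hand, and as written it has two genuine gaps.

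First, the concatenated path need not be a uniform quasi-geodesic. Proposition \ref{p:line of expansion} only certifies the meeting once both indices pass the threshold $\gamma_1 d(g,h)+\gamma_2$ \emph{and} the derivative levels agree to within $c$; matching the levels can cost a further $O(d(g,h))$ steps, since the per-step gain is only pinned between $c$ and $\overline{c}$. But the two lines may already be $\gamma_3$-close well before the certified meeting index (the median of $g$, $h$, $\Omega(x)$ can sit far below the threshold), in which case your path ascends a long common portion and descends it again: a subsegment of length proportional to $d(g,h)$ whose endpoints are at bounded distance, violating \eqref{eq: quasi-geodesic}. The only lower bound on $d(E_i,F_j)$ your tools provide is $|D_x(E_i,F_j)|/\overline{c}$, which vanishes whenever the levels cross, so quasi-geodesicity of the concatenation is not established. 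Second, and more seriously, the concluding step is a non sequitur: showing that all the \emph{special} paths between the same two endpoints fellow-travel is a bigon statement; it controls neither arbitrary quasi-geodesics (the Morse lemma is a consequence of hyperbolicity, not available in advance) nor the thin-triangle condition \eqref{eq: delta thin}. The correct finish along your lines is the one you mention only in passing: either verify a genuine triangle condition for the family of special paths (the Bowditch/Masur--Minsky ``guessing geodesics'' criterion, which tolerates non-quasi-geodesic paths provided paths between adjacent vertices have uniformly bounded diameter), or prove the four-point inequality $(g_1,g_3)_e\ge\min\{(g_1,g_2)_e,(g_2,g_3)_e\}-\delta$ directly. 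Either way one must compare \emph{three} lines of expansion simultaneously and show their branches interleave coherently; that combinatorial work is exactly what the paper delegates to Nekrashevych's theorem, and it is absent from your sketch.
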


\begin{proof}
We will use the following result \cite[Theorem 2.11]{Nekrashevych1} of Nekrashevych. 

Fix \(x\in {\bf S}^1\), and denote by \(\Gamma^x\) the {\bf directed graph} whose vertices are the elements of \(G\), and whose directed edges are the couples \( (g_0, g_1) \in  G ^2 \) such that 
\begin{equation} \label{eq: definition directed edges} D_x (g_0, g_1) \geq   \underline{c}/2 \text{ and } d(g_0, g_1) \leq 2,\end{equation}
where \(\underline{c} >0\) is the constant given by Remark \ref{r: uniformity}. The set \(G\) is equipped with the combinatorial metric \(d_{\Gamma^x}\) induced by the underlying undirected graph induced by \(\Gamma^x\): \(d_{\Gamma^x} (g_1,g_2)\) is the minimum number of undirected edges of \(\Gamma^x\) necessary to go from \(g_1\) to \(g_2\). Hence, the combination of Proposition \ref{p:line of expansion} and of \cite[Theorem 1.2.9]{Nekrashevych1} with the following constants \(\eta= \underline{c}/4\), \(\Delta=\overline{c}\),  show that \(\Gamma^x\) equipped with its distance \(d_{\Gamma^x}\) is Gromov hyperbolic. 

\vspace{0.3cm}

\noindent {\bf Claim: } \textit{The inclusion \((G, d_{\Gamma^x}) \rightarrow (G, d)  \) is a quasi-isometry}

\begin{proof}  From the definition of \(\Gamma_x\), we immediately have 
\begin{equation}\label{eq: bound 1}  d\leq 2 d_{\Gamma^x} .  \end{equation}
Let \( \{ g_1,g_2 \} \)  be an edge of \(G\), i.e. \(g_2\in Sg_1\). We can assume that \(D_x(g_1, g_2) \geq 0\) up to exchanging \(g_1\) and \(g_2\). If \(D_x(g_1,g_2) \geq \underline{c} /2 \), then the directed arrow \(g_1\rightarrow g_2\) belongs to \(\Gamma^x\). If not, let \(s\in S\) be an element such that \( D_x (g_1, sg_1) \geq \underline{c}\) given by Remark \ref{r: uniformity}. Then \(\Gamma^x\) contains the directed edges: \(g_1\rightarrow sg_1\) (by definition of \(s\)) and \(g_2\rightarrow sg_1\). So the \(d_{\Gamma^x}\)-distance between \(g_1\) and \(g_2\) in the graph is bounded by \(2\). In particular, we get 
\begin{equation}\label{eq: bound2} d_{\Gamma^x} \leq 2 d.\end{equation}
The claim follows from equations \eqref{eq: bound 1} and \eqref{eq: bound2}.
\end{proof}

The Proposition follows from the claim and the fact that Gromov hyperbolicity is a quasi-isometric invariant. \end{proof}

\section{The group \(G\) is virtually a Fuchsian group}  \label{s: Fuchsian group}
A consequence of Proposition \ref{p: hyperbolicity} is that we have a well-defined map 
\begin{equation}\label{def: map to boundary} \Omega : {\bf S}^1 \rightarrow \partial G\end{equation} 
which associates to a point \( x\in {\bf S}^1\) the equivalence class in \(\partial G\) of a \( \underline{c}\)-line of expansion at the point \(x\) (here \(\underline{c}\) is the constant appearing in Remark \ref{r: uniformity}). Indeed, Proposition \ref{p:line of expansion} shows that two such lines of expansion are at a bounded Hausdorff distance from each other. 

\begin{proposition} \label{p: boundary circle} The map \(\Omega : {\bf S}^1 \rightarrow \partial G\) is a finite covering.\end{proposition}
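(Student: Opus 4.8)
The plan is to show that $\Omega : {\bf S}^1 \to \partial G$ is a continuous, open, surjective, finite-to-one local homeomorphism, which for a map between compact metrizable spaces forces it to be a finite covering. I would organize the argument around the dictionary between the Gromov product in $\partial G$ and the derivative cocycle that was established in Section \ref{s: convergence}.

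\textbf{Continuity and the fibers.} First I would reinterpret Proposition \ref{p:line of expansion} as a statement about the boundary: if $x,y \in {\bf S}^1$ and $\underline c$-lines of expansion $(E_m^x)_m$, $(F_n^y)_n$ satisfy $D_x(E_m^x, F_n^y) \le \underline c$ for arbitrarily large $m,n$ (after normalizing $E_0^x = e$, this is the cocycle evaluated along the two lines), then $\Omega(x)$ and $\Omega(y)$ are within bounded Gromov distance, i.e. $(\Omega(x),\Omega(y))_e$ is bounded; conversely, if the two lines pull apart (large $D_x$), then $(\Omega(x),\Omega(y))_e$ is large. Concretely: given $x$, take a line of expansion $(E_m^x)$; for a point $y$ close to $x$ on the circle, the set $S$ of Remark \ref{r: uniformity} lets us build a line of expansion at $y$ whose elements track those at $x$, and the bounded-distortion Lemma \ref{l: distortion} plus Koebe Lemma \ref{l:Koebe} show that the two lines stay a bounded distance apart in the Cayley graph as long as $x,y$ stay in a small enough ball — precisely because $\widetilde{(E_m^x)^{-1}}$ is univalent on $B(x_m,r)$ and $y_m = F_m^y(y)$ stays within the Koebe-controlled image. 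This gives continuity of $\Omega$. For the fiber over a boundary point $p$: if $\Omega(x) = \Omega(y) = p$ then the lines at $x$ and at $y$ fellow-travel, so $D_x(E_m^x, F_m^y)$ stays bounded; I would then argue that $x \mapsto (\text{line at } x)$ is injective modulo the failure locus where $y$ is dynamically indistinguishable from $x$, and that this failure locus is finite — using expansiveness (an element with $|Dg|>1$ at $x$ separates nearby points under forward iteration, so two distinct points cannot have boundedly-close expansion lines unless they eventually separate, bounding the multiplicity).

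\textbf{Openness and surjectivity.} Surjectivity: every geodesic ray in $G$ should be fellow-traveled by some line of expansion. Starting from the identity, at each stage the uniform expansion on $S$ (Remark \ref{r: uniformity}) lets us prepend a generator that expands by at least $\underline c$ at the relevant point $x_m \in {\bf S}^1$; compactness of ${\bf S}^1$ lets us extract a point $x$ whose line of expansion converges to the given boundary point. This identifies $\partial G$ with a quotient of ${\bf S}^1$, and combined with continuity and the finite fibers, $\Omega$ is a closed (hence proper) surjection with finite fibers; openness then follows from the converse half of Proposition \ref{p:line of expansion} (distinct boundary points come from circle points whose expansion lines diverge, so the image of a small circle arc contains a neighborhood in $\partial G$). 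A map between compact Hausdorff spaces that is continuous, open, surjective and has finite fibers of locally constant cardinality is a finite covering; local constancy of the fiber cardinality comes from $\partial G$ being connected (as $G$ is not virtually cyclic — here one invokes expansiveness again, via Lemma \ref{l: locally discrete somewhere vs everywhere}, to rule out the virtually cyclic case where $\partial G$ is two points).

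\textbf{Main obstacle.} The delicate point is controlling the multiplicity of $\Omega$, i.e. proving the fibers are finite and of constant size: one must convert "two lines of expansion at $x$ and $y$ fellow-travel forever" into "$x$ and $y$ lie in a controlled finite set," and this requires quantifying how expansiveness forces orbits of distinct points to separate. I expect the clean way is: if $\Omega(x)=\Omega(y)$ with $x\ne y$, the fellow-traveling lines $\widetilde{(E_m^x)^{-1}}$ have uniformly bounded distortion on balls of radius $r$ around $x_m$ and $y_m$ by Lemma \ref{l: distortion} and Koebe, forcing $|x_m - y_m|$ to shrink like $|D E_m^x(x)|^{-1} \sim e^{-\underline c m}$; running $E_m^x$ backward, $|x-y|$ would have to be arbitrarily small unless the number of such $y$ is bounded by a covering-number argument on the circle at scale $r$. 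Making the uniformity in this estimate airtight — in particular that the implied constants do not depend on $x$ — is where the real work lies, and it leans essentially on the already-proven Proposition \ref{p:line of expansion} together with Lemmas \ref{l: distortion} and \ref{l:Koebe}.
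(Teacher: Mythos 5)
Your overall architecture (continuity, finite fibers via a packing argument on the circle, constant fiber cardinality, conclude covering) is the same as the paper's, but two of the load-bearing steps have genuine gaps. First, the fiber-finiteness mechanism is stated backwards. If $x\neq y$ lie in the same fiber and $\{g_n\}$ is a geodesic ray to the common boundary point, there is no reason for $|g_n(x)-g_n(y)|$ to shrink; the correct argument (which is what Lemma \ref{l: distortion} and Koebe actually give) is that $g_n^{-1}$ maps an interval of \emph{definite} radius $r'$ around $g_n(x^k)$ into an interval of radius $\sim r\,|Dg_n(x^k)|^{-1}$ around $x^k$. The target intervals shrink onto the distinct points $x^k$ and hence become disjoint; injectivity of $g_n^{-1}$ then forces the definite-size source intervals to be disjoint, so at most $1/r'$ of them fit on the circle, bounding the fiber cardinality. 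Your version ("$|x_m-y_m|$ shrinks like $e^{-\underline{c}m}$, run $E_m^x$ backward, $|x-y|$ must be small") does not close as written: $|x-y|$ is fixed, and nothing in the setup makes the forward images of distinct fiber points approach each other.

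Second, your route to constancy of the fiber cardinality does not work. You invoke connectedness of $\partial G$ "as $G$ is not virtually cyclic", but non-virtually-cyclic hyperbolic groups can have totally disconnected boundary (virtually free groups have Cantor boundary); connectedness of $\partial G$ is only available \emph{after} you know $\Omega$ is a continuous surjection, and even then connectedness only upgrades \emph{locally} constant to constant — you never establish local constancy, which is the actual content. The paper's argument here is different and worth internalizing: one first shows the map $\omega\mapsto\Omega^{-1}(\omega)$ is continuous for the Hausdorff topology on compact subsets, which gives \emph{lower} semi-continuity of the cardinality function $k$; then $\{k=\min k\}$ is closed, non-empty and $G$-invariant, and minimality of the $G$-action on $\partial G$ (valid since $G$ is not virtually cyclic) forces it to be all of $\partial G$. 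Relatedly, your appeals to a "converse half" of Proposition \ref{p:line of expansion} for openness, and your surjectivity sketch (which builds the expansion line before choosing the point $x$ it is supposed to be based at), are not arguments as stated; both surjectivity and the covering structure fall out of the equivariance-plus-minimality scheme above once finiteness and Hausdorff continuity of the fibers are in place.
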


\begin{proof} The proof is organized as a sequence of claims. 

\vspace{0.2cm} 

\noindent {\bf Claim 0 --} \textit{The map \(\Omega : {\bf S}^1 \rightarrow \partial G\) is equivariant and continuous.}

\begin{proof} The equivariance is immediate. Let us prove the continuity at a point \(x\in {\bf S}^1\). Suppose \( \{E_m^x\}_m\) is a \( \underline{c}\)-line of expansion at \(x\) where \(m>0\) is the constant given by Remark \ref{r: uniformity}. Let \(m_0\) be a large integer. For \(y\in {\bf S}^1\) in a sufficiently small neighborhood of \(x\), denoting \( y_m := E_m^x (y)\), and recalling that \( E_m^x = s_m^x \circ E_{m-1}^x\) (see Definition \ref{def: line of expansion}), we have 
\[ \log |D s_m^x (y)|\geq \underline{c} /2 \text{ for every } 0\leq m\leq m_0.\]
Hence, one can define a \(\underline{c}/2\)-line of expansion \(\{E_m^y\} _{m\geq 0}\) relative to \(y\) by 
\[ E_m^y:= E_m^x \text{ if } m\leq m_0\]
and by taking for \( \{E_m^y\} _{m\geq m_0}\) a \(\underline{c}\)-line of expansion relative to \(y\). The two \( \underline{c}/2\)-lines of expansion \( \{E_m^x\}_m\) and \(\{E_m^y\}_m\) relative to \(x\) and \(y\) respectively coincide for \(m\leq m_0\) and converge respectively to \(\omega_x\) and \(\omega_y\). Hence, being \((\alpha,0)\)-quasi-geodesics for a constant \(\alpha\) depending only on \(\overline{c}\) and \(S\) (see Lemma \ref{l: quasi-geodesic}) their limit points \(\omega_x\) and \(\omega_y\) are close to each other in \(\partial G\). This proves continuity of \(\Omega\).\end{proof}

\vspace{0.2cm}

\noindent {\bf Claim 1 --} \textit{There are constants \(c,d  >0\) such that, for every \(x \in {\bf S}^1\),  any geodesic ray \( \{g_n\} _{n\geq 0}\) in \(G\) starting at \(g_0=e \) and tending to \( \omega \in \partial G\) satisfies 
\[  D_x (e,g_n) \geq c n -d \text{ for } n\leq  (\Omega (x), \omega)_e  \] 
and 
\[ D_x (e, g_n) \leq - cn +\overline{c} \cdot (\Omega (x), \omega)_e + d  \text{ for } n\geq  (\Omega (x),\omega)_e .\]}

\begin{proof}
Let \(\{E_l^x\}_{l\geq 0}\) and \(\{F_m^x\}_{m\geq 0}\) be \(\underline{c}\)-lines of expansion relative to \(x\) beginning at \(E_0^x = e\) and \(F_0^x= g_n\) respectively. By Proposition \ref{p:line of expansion}, there exist integers \(L\) and \(M\) such that 
\[ d( E_L^x , F_M ^{x}  ) \leq \gamma_3 .\]
Let \(\alpha\) be given by Lemma \ref{l: quasi-geodesic}, and let \(\beta =\gamma_3\). The \((\alpha,\beta)\)-quasi-geodesic triangle formed by the segments 
\[  \{g_s\}_{0\leq s\leq n}, \  \{E_l^x\} _{0\leq l\leq L} \text{ and } \{ F_m ^{x}\}_{0\leq m\leq M} ,\] 
is \(\delta\)-thin for a certain constant \(\delta\) depending only on \((\alpha, \beta)\) and the hyperbolicity constants of \((G,d)\). So for some integer \(N\), the segments \(\{ g_s\} _{0\leq s\leq (\Omega (x),\omega)_e}\) and  \(\{ g_s\} _{n \geq s \geq (\Omega (x),\omega)_e} \) are \(\delta\)-close to \(\underline{c}\)-segments of expansion relative to \(x\). The conclusion follows.
\end{proof}

\vspace{0.2cm} 

\noindent {\bf Claim 2 --} \textit{There is a number \(M\in {\bf N}^*\) such that the level subsets \( \Omega^{-1} (\omega) \), for \(\omega \in \partial G\), have cardinality less than \(M\). }

\begin{proof} Indeed, let \( x^k \in \Omega^{-1} (\omega)\), \(k=1,\ldots ,M\), be distinct points, and let \(\{g_n\}_n\) be a geodesic ray from \(e\) to \( \omega\in \partial G\). For every \(k=1,\ldots, M\), the sequence \(\{g_n\}\) is \(O(\delta)\)-close to a \(\underline{c}\)-line of expansion \(\{E_m^{x^k}\}_{m\geq 0}\) relative to \(x^k\) and starting at \(E_0^{x^k} =e\). 

Let \(n\) be a large integer. For every \(k=1,\ldots, M\), there exists \(m_k\) such that \( d( g_n, E_{m_k}^{x^k} ) = O (\delta)\).  Let \(r>0\) be the constant given by Lemma \ref{l: distortion}.  Denoting \( x_n^k= E_{m_k}^{x^k}  (x^k)\) for \(n\geq 0\) and \(k=1,\ldots, M\), Lemma \ref{l: distortion} shows that the map \( \widetilde{(E_{m_k}^{xk})^{-1}}\) extends as a univalent map defined on \( B(x_n^k , r/2) \) whose image lies in the ball \(B( x^k , \frac{re^{\kappa}}{2 |Dg_n(x^k)|} )\). In particular, there exists \( r'>0\) depending only on \(r\) and \(\delta\) such that, denoting \(y_n^k = g_n(x^k)\), the map \(g_n ^{-1} \) sends the interval \(B_{{\bf S}^1} ( y_n^k, r') \) inside the interval \( B_{{\bf S}^1} ( x^k , \frac{re^{\kappa}}{2 |Dg_n(x^k)|} )\).

For \(n\) large enough, the intervals \( B_{{\bf S}^1} ( x^k , \frac{re^{\kappa}}{2 |Dg_n(x^k)|} )\),  \(k=1,\ldots, M\), are disjoint, hence so are the intervals \(B_{{\bf S}^1} (x_n^k, r')\)'s. In particular, \(M\leq 1/r'\), and the claim follows. \end{proof}

Let \(\mathcal K\) be the set of compact subsets of \({\bf S}^1\) equipped with the topology induced by the Hausdorff distance.

\vspace{0.2cm}

\noindent {\bf Claim 3 --} \textit{ The map \(\omega \in \partial G \mapsto \Omega^{-1} (\omega ) \in \mathcal K\) is continuous.} 

\begin{proof} Let \(\{\omega ^k\}_k\) be a sequence of points of \(\partial G\) tending to \( \omega^\infty \in \partial G\). For every  \(k\in {\bf N}\), let \( \{g_n^k \}_{n\geq 0}\) be a geodesic ray tending to \(\omega^k\). Up to extracting if necessary, one can assume that for each \(n\), the sequence \( \{ g_n^k\}_{k\geq 0}\) is stationary,  namely for \(k\geq k(n)\), \(g_n^k= g_n^\infty\). The sequence \( \{ g_n^\infty\} _n\) is a geodesic ray tending to \(\omega^\infty\). Using the same notations as in Claim 1, the Hausdorff distance between \( \Omega^{-1} (\omega^k) \) and the set \( \{\log |Dg_n^k| \geq 0\} \) is less than \(\varepsilon\) for every \(n\geq n (\varepsilon)\), for every \(k\in {\bf N} \cup \{\infty\}\). Applying this to \( k \geq k(n(\varepsilon))\), we get that the Hausdorff distance between \(\Omega^{-1} (\omega^k ) \) and \(\Omega^{-1}(\omega^\infty)\) is less than \(2\varepsilon\), which proves the claim.  \end{proof}

\vspace{0.2cm}

\noindent {\bf Claim 4 --} \textit{The function \(k: \omega \in \partial G\mapsto |\Omega^{-1} (\omega)|\in {\bf N}\) is constant. }

\begin{proof} Notice that \(G\) cannot be virtually cyclic since otherwise its action on the circle could not be expanding. By \cite[Chapitre 8]{GdH}, it acts minimally on its boundary. Claim 3 shows that \(k\) is lower semi-continuous, and it is \(G\)-invariant. In particular,  the subset \(\{k=\min k\}\subset  \partial G\) is closed, non empty, and \(G\)-invariant. The action of \(G\) on \(\partial G\) being minimal, it must be the whole \(\partial G\), hence the conclusion holds. 
\end{proof}

Claims 2, 3 and 4 show that \( \Omega\) is a covering.  \end{proof}

\begin{corollary}\label{c: conjugation Fuchsian group}
The action of \(G\) on \({\bf S}^1\) is topologically conjugated to the action of a (cocompact) lattice of \(\widetilde{\text{PGL}_2^k({\bf R})} \) on the \(k\)-th covering \(\widetilde{{\bf R}P^1}^k\) for a certain integer \(k>0\). 
\end{corollary}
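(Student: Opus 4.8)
The plan is to bootstrap from Proposition~\ref{p: boundary circle}, which already identifies $\partial G$ with a finite quotient of ${\bf S}^1$, into a genuine conjugacy with a Fuchsian-type action. First I would observe that since $\Omega:{\bf S}^1\to\partial G$ is a finite covering of degree $k$ and $G$ acts on ${\bf S}^1$ with the covering being equivariant, the boundary $\partial G$ is homeomorphic to a circle. The natural boundary action of a hyperbolic group that is not virtually cyclic is a \emph{convergence action} on $\partial G$ (this is classical; it follows from the North-South dynamics of loxodromic elements and the fact that distinct loxodromics have disjoint fixed point pairs). Thus $G$ acts as a convergence group on a topological circle. By the convergence group theorem of Gabai~\cite{Gabai} and Casson--Jungreis~\cite{CJ}, $G$ is then (virtually) a cocompact Fuchsian group, i.e.\ there is a finite-index subgroup $G_0\subset G$ and a homeomorphism conjugating the action of $G_0$ on $\partial G$ to that of a cocompact lattice of $\mathrm{PGL}_2({\bf R})$ on ${\bf R}P^1$. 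Pulling this back through $\Omega$ gives the topological conjugacy of the $G$-action on ${\bf S}^1$ with the action of the lattice on the $k$-th cyclic cover $\widetilde{{\bf R}P^1}^k$ — one has to check that the deck group of $\Omega$ is cyclic of order $k$ and is central (or at least that the covering is the \emph{cyclic} one), which follows because $\Omega$ is equivariant and $\partial G\cong{\bf S}^1$ forces the fiber-permutation representation of $G$ to factor through a rotation, hence through a cyclic quotient; equivalently, $G$ lifts to $\widetilde{\mathrm{PGL}_2^k({\bf R})}$.

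Concretely, the key steps in order are: (i) record that $G$ is not virtually cyclic (from expansiveness, as already used in Claim~4) and hence acts minimally on $\partial G$ with the standard convergence property; (ii) use Proposition~\ref{p: boundary circle} to conclude $\partial G\cong{\bf S}^1$, so the convergence action is on the circle; (iii) invoke Gabai / Casson--Jungreis to get a topological conjugacy on $\partial G$ between $G$ (virtually, with the subtleties about passing to finite index handled as in those references, which actually give the full group is Fuchsian after conjugacy once the action is faithful and has no parabolics-or-boundary issues — here it is a cocompact surface-group-like situation) and a cocompact lattice $\Gamma\subset\mathrm{PGL}_2({\bf R})$ acting on ${\bf R}P^1$; (iv) transport the conjugacy up along the finite covering $\Omega$, identifying the covering ${\bf S}^1\to\partial G$ with a cyclic cover $\widetilde{{\bf R}P^1}^k\to{\bf R}P^1$ and the $G$-action with that of a lattice in $\widetilde{\mathrm{PGL}_2^k({\bf R})}$; the integer $k$ is exactly the degree of $\Omega$.

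I expect the main obstacle to be step~(iii)–(iv): verifying that the output of the convergence group theorem matches the $k$-fold cyclic covering structure, i.e.\ that the deck transformation group of $\Omega$ is cyclic, that it commutes with the $G$-action in the right way so the conjugacy descends, and that no orbifold points or parabolic elements spoil cocompactness. One must check that $G$ has no parabolics: an expanding, locally discrete group cannot contain a parabolic element (a parabolic would produce a non-hyperbolic fixed point, contradicting either expansiveness at that point or, via the renormalization argument in the proof of Lemma~\ref{l: locally discrete somewhere vs everywhere}, local discreteness). Granting this, $G$ is a cocompact Fuchsian group (up to finite index and conjugacy), and the finite-cyclic-cover bookkeeping is routine but needs to be stated carefully. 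I would conclude by noting that the improvement to \emph{analytic} conjugacy is deferred to the next section via Ghys's differentiable rigidity of Fuchsian group actions~\cite{Ghys_rigidite}, so the present corollary only claims topological conjugacy.
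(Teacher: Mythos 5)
Your proposal follows essentially the same route as the paper's proof: Proposition \ref{p: boundary circle} shows the action on \({\bf S}^1\) finitely covers the boundary action, hence \(\partial G\cong {\bf S}^1\); the convergence group theorem of Gabai and Casson--Jungreis then conjugates the boundary action to that of a lattice in \(\text{PGL}_2({\bf R})\); and the conjugacy is transported back through the degree-\(k\) covering \(\Omega\). The only point where you diverge is cocompactness, which the paper settles by noting that a non-cocompact lattice would force \(G\) to be virtually free with Cantor-set boundary --- a cleaner argument than your parabolic-exclusion sketch, which as stated is shaky because the conjugacy is only topological (so derivative data does not transfer) and expansiveness does not prevent an individual element from having derivative one at its own fixed point.
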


\begin{proof}
By Proposition \ref{p: boundary circle}, the action of \(G\) on \({\bf S}^1\) is topologically conjugate to a finite covering of its action on its boundary. This proves that the boundary of \(G\) is homeomorphic to the circle, hence the result follows from the convergence group theorem \cite{Gabai, CJ}. The lattice is cocompact since otherwise the group \(G\) would be virtually free, and its boundary would be a Cantor set.
\end{proof}

\section{Differentiable rigidity} \label{s: differentiable rigidity}


It is presumably well-known that Corollary \ref{c: conjugation Fuchsian group}, together with the differentiable rigidity theory developed by Ghys in \cite{Ghys_rigidite}, imply our main result, the Corollary \ref{c: classification}. However, this implication is not directly stated in this form in the litterature, so we provide a detailed proof below.

\begin{proposition}\label{p: invariant projective structure}
Let \( G\subset \text{Diff}^\omega({\bf S}^1) \) be a finitely generated subgroup which is expanding and locally discrete in the analytic category. Then \(G\) preserves an analytic \({\bf R}P^1 \)-structure.
\end{proposition}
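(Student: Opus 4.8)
The plan is to upgrade the topological conjugacy of Corollary 3.20 to an analytic conjugacy using Ghys' differentiable rigidity theory for Fuchsian group actions, and then transport the projective structure of $\widetilde{{\bf R}P^1}^k$ back to ${\bf S}^1$. First I would recall that by Corollary 3.20 the action of $G$ on ${\bf S}^1$ is topologically conjugated to the action of a cocompact lattice $\Gamma$ in $\widetilde{\text{PGL}_2^k({\bf R})}$ on $\widetilde{{\bf R}P^1}^k$; denote by $h\colon {\bf S}^1\to \widetilde{{\bf R}P^1}^k$ the conjugating homeomorphism. The space $\widetilde{{\bf R}P^1}^k$ carries a natural analytic projective structure (pulled back from ${\bf R}P^1$ by the covering), which is preserved by $\widetilde{\text{PGL}_2^k({\bf R})}$ and in particular by $\Gamma$. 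If $h$ were known to be an analytic diffeomorphism we would be done immediately by pulling this structure back along $h$. The content of the proposition is therefore to establish enough regularity of $h$, and this is where Ghys' theorem enters.

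The key step is to invoke Ghys' rigidity result from \cite{Ghys_rigidite}: a finitely generated group of analytic (or $C^2$, or even $C^{1+\mathrm{bv}}$) diffeomorphisms of the circle whose action is topologically conjugate to that of a cocompact Fuchsian group is in fact $C^\infty$ (indeed analytically, since $G\subset\text{Diff}^\omega$) conjugate to it. One has to be slightly careful because the model here is an action on the $k$-fold cyclic cover $\widetilde{{\bf R}P^1}^k$ of ${\bf R}P^1$ rather than on ${\bf R}P^1$ itself, and $G$ itself need not embed in $\widetilde{\text{PGL}_2^k({\bf R})}$ — only a finite-index subgroup $G_0\leq G$ does, and for $G_0$ the quotient action descends to a genuine cocompact Fuchsian action on ${\bf R}P^1$. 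So I would pass to $G_0:=G\cap(\text{preimage of }\text{PGL}_2({\bf R})\text{-part})$ of finite index, project to an action on ${\bf S}^1={\bf R}P^1$, apply Ghys' theorem there to get an analytic conjugacy, lift it through the $k$-fold covering (which is canonically analytic), and conclude that $h$ is analytic on ${\bf S}^1$. A routine averaging/finite-index argument then shows the projective structure obtained for $G_0$ is automatically $G$-invariant, since $G/G_0$ is finite and permutes the finitely many $G_0$-translates; more directly, the ${\bf R}P^1$-structure on $\widetilde{{\bf R}P^1}^k$ is preserved by the full $\widetilde{\text{PGL}_2^k({\bf R})}$ and hence by all of $\Gamma$, so $h^*$ of it is a $G$-invariant analytic ${\bf R}P^1$-structure on ${\bf S}^1$.

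Concretely, the argument would run: (i) identify $\Gamma$, $k$, and $h$ from Corollary 3.20; (ii) note the canonical analytic ${\bf R}P^1$-structure $\sigma_0$ on $\widetilde{{\bf R}P^1}^k$ and its $\widetilde{\text{PGL}_2^k({\bf R})}$-invariance; (iii) using that $G\subset\text{Diff}^\omega({\bf S}^1)$ acts with a topological conjugacy to a cocompact Fuchsian action, apply Ghys' theorem (after passing to the finite-index subgroup where the model lives on ${\bf R}P^1$ and lifting) to conclude $h$ is an analytic diffeomorphism; (iv) set $\sigma:=h^*\sigma_0$, an analytic ${\bf R}P^1$-structure on ${\bf S}^1$ invariant under $G$ since each $g\in G$ corresponds under $h$ to an element of $\Gamma\subset\widetilde{\text{PGL}_2^k({\bf R})}$ preserving $\sigma_0$.

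The main obstacle I anticipate is \textbf{the passage from the $k$-fold-cover model to a setting where Ghys' theorem applies verbatim}, together with verifying that the regularity hypotheses of \cite{Ghys_rigidite} (minimality, or Lipschitz/bounded-variation control on the conjugacy, $C^2$ or analytic regularity of the group elements) are met here — they are, since $G$ is analytic, the action is minimal because $\partial G\cong {\bf S}^1$ and $G$ acts minimally on $\partial G$ (it is not virtually cyclic), and the Fuchsian model is cocompact so the conjugacy is automatically quasisymmetric/absolutely continuous by the classical theory of Fuchsian groups. Once these are checked the conclusion is essentially a citation, so the real work is the careful bookkeeping with finite-index subgroups and the cyclic covering, plus confirming that Ghys' conclusion is analytic conjugacy (not merely $C^\infty$) when the input diffeomorphisms are analytic — which follows from the analytic version of his rigidity statement, or alternatively from the fact that a $C^\infty$ conjugacy between two analytic ${\bf R}P^1$-structures on ${\bf S}^1$ intertwining analytic actions must itself be analytic by ellipticity/analytic hypoellipticity of the relevant Schwarzian equation.
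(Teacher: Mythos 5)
Your plan is precisely the ``direct'' route that the author flags at the start of Section \ref{s: differentiable rigidity}: Corollary \ref{c: conjugation Fuchsian group} plus Ghys' rigidity ``presumably'' gives the result, ``however, this implication is not directly stated in this form in the literature''. The concrete gap in your reduction is the descent to \({\bf R}P^1\). The deck transformation of the covering \(\widetilde{{\bf R}P^1}^k\rightarrow {\bf R}P^1\) pulls back, under the conjugating map \(h\), to a finite-order \emph{homeomorphism} of \({\bf S}^1\) commuting with \(G_0\); at this stage \(h\) is only topological, so this homeomorphism is not analytic, the quotient circle carries no analytic structure for which the induced \(G_0\)-action is by analytic diffeomorphisms, and there is no ``action on \({\bf R}P^1\)'' to which Ghys' theorem applies verbatim. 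If instead you want a version of Ghys' theorem that applies directly on the \(k\)-fold cover, to a lattice that may contain torsion and orientation-reversing elements, that is exactly the statement the author says is missing from the literature, and supplying it amounts to re-running Ghys' proof. Two smaller points: the conjugacy is \emph{not} ``automatically quasisymmetric/absolutely continuous by classical theory'' (establishing any regularity of \(h\) is the whole content of rigidity); and projective structures cannot be ``averaged'' over \(G/G_0\) --- the paper's substitute is a uniqueness argument via the Schwarzian derivative: an invariant nonzero quadratic differential would give a \(G^+\)-invariant density, contradicting expansion.

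What the paper actually does is rebuild the relevant part of Ghys' argument in this setting: after reducing to the orientation-preserving case by the Schwarzian computation, it forms \(U\subset {\bf S}^1\times{\bf R}P^1\) (the complement of the graph of \(\Phi=\phi\circ\Omega\)), lets \(G\) act on \(U\times{\bf R}\) by \(g(x,z,t)=(g(x),\rho(g)(z),t+\log|Dg(x)|)\), proves this action is free, properly discontinuous and cocompact (using Claim 1 of Proposition \ref{p: boundary circle} and the Gromov product), so the quotient \(M\) is a closed analytic \(3\)-manifold on which \(\partial/\partial t\) generates an Anosov flow with weak stable foliation \(\{dx=0\}\). It then cites only the intermediate result \cite[Th\'eor\`eme 4.1]{Ghys_rigidite} --- the weak stable foliation of an analytic Anosov flow on a closed \(3\)-manifold carries an analytic transverse projective structure --- and pulls that structure back to \({\bf S}^1\). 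Note also that Proposition \ref{p: invariant projective structure} asserts only the existence of an invariant analytic \({\bf R}P^1\)-structure; your plan, if it closed, would directly yield the stronger Corollary \ref{c: classification}, whereas the paper still needs a further argument there (triviality of the holonomy \(A\) of the developing map, ruling out invariant densities and finite orbits) which is not subsumed by citing rigidity. So your outline identifies the right ingredients, but the step ``apply Ghys' theorem there'' is exactly where the real work lies, and as written it does not go through.
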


\begin{proof}
We first observe that we can assume that the group \(G\) preserves the orientation on \({\bf S}^1\). Indeed, suppose that we know that the subgroup \(G^+\) of elements of \(G\) that preserves the orientation on \({\bf S}^1\) preserves an analytic projective structure \(\sigma\) on \({\bf S^1}\). If \(G^+=G\) we are done. If not, let \(g\) be an element of \(G\) that reverses the orientation. Since \(G\) is generated by \(G^+\) and \(g\), it suffices to prove that \(g\) preserves the projective structure \(\sigma\) as well. For this, let us consider the quadratic differential defined by \( S(g) := \{ g, z \} dz^2\), where \(\{g,z\}\) is the Schwarzian derivative \(\{g,z\}= \frac{D^3 g}{Dg}- \frac{3}{2} \left(\frac{D^2 g}{Dg}\right)^2\) computed in projective coordinates of \(\sigma\). The cocycle relations satisfied by the Schwarzian derivative shows that \(S(g)\) is a well-defined quadratic differential on the circle, namely it is independant of the chosen projective coordinates. We need to prove that \(S(g) =0\) identically. For every \(f\in G^+\), there exists \(h\in G^+\) such that \( g\circ f = h\circ g\). The cocycle relation satisfied by the Schwarzian derivative, together with the fact that both \( f\) and \(h\) preserve \(\sigma\), implies  \( S (g) = f^* S(g)\), hence proving that \(S(g)\) is invariant by the whole group \(G^+\). In particular, if \(S(g)\) does not vanish identically, the volume \(\sqrt{|S(g)|}\) is \(G^+\)-invariant, which contradicts the fact that \(G^+\) is expanding on the support of \(S(g)\). 

So from now on we will assume that \(G\) preserves orientation on \({\bf S}^1\). Denote by \( \phi : \partial G \rightarrow {\bf R}P^1\) a homeomorphism that conjugates the \(G\)-action on its boundary to the action of a Fuchsian group \(\Gamma\subset \text{PGL}_2({\bf R})\) on \({\bf R}P^1\) (see Corollary \ref{c: conjugation Fuchsian group} for the existence of \(\phi\)). We can assume that \(\phi\) preserves orientation, so that the lattice \(\Gamma\) is indeed contained in \(\text{PSL} _2({\bf R})\). We denote by \( \rho :G\rightarrow \Gamma\) the representation that satisfies 
\[ \phi ( g \omega ) = \rho (g) \phi (\omega) \text{ for every } g\in G, \ \omega \in \partial G.\]

Let \( U\subset  {\bf S}^1 \times {\bf R}P^1 \) be the complement of the graph of \(\Phi = \phi \circ \Omega\), namely  the set of points \( (x, z) \in {\bf S}^1 \times {\bf R}P^1\) such that \( z \neq \Phi(x)\). Consider the analytic action of \(G\) on \( U \times {\bf R}\) defined by: 
\begin{equation}\label{eq: action} g (x, z , t ) = (g(x), \rho(g)(z) , t + \log |Dg(x)| ) .\end{equation}

\vspace{0.2cm} 

\noindent {\bf Claim 0 --} \textit{The action \eqref{eq: action} is free, proper discontinuous and cocompact. Hence the quotient of \( U\) by \(G\) is a closed analytic \(3\)-manifold \(M\).  } 

\vspace{0.2cm} 

\textit{Freeness.} Assume that there exists a point \( (x,z,t)\) which is fixed by an element \(g\in G\). The theory of hyperbolic groups (see e.g. \cite[Chapitre 8, \textsection 3]{GdH}) tells us that \(g\) is either of finite order, or a hyperbolic element, in which case the sequence \(\{g^n\}_{n\in{\bf N}}\)  is a quasi-geodesic. Assume by contradiction that \(g\) is hyperbolic. It has a fixed point \(x\) on \({\bf S}^1\) having a derivative equal to \(1\) at \(x\) -- recall that we assume the orientation is preserved by \(G\) -- so all the numbers \( D_x (e, g^n) \) are zero. This contradicts the claim 1 of the proof of Proposition \ref{p: boundary circle} since the sequence \(\{g^n\}_{n\geq 0}\) is a quasi-geodesic. This contradiction shows that \(g\) has finite order in \(G\). Its image \(\rho(g)\) is an element of \(\text{PSL}_2({\bf R})\) of finite order that must be the identity since it fixes the point \(z\in {\bf R}P^1\).  Hence, \(g\) lies in the kernel of \(\rho\) which is a cyclic subgroup of \(G\) acting freely on \({\bf S}^1\). Since \(g\) fixes the point \(x\), it is therefore the identity map. Hence the action is free as claimed.

\vspace{0.1cm} 

\textit{Proper discontinuity.}  Any compact set in \(U\) is contained in some compact set of the sort
\begin{equation}\label{eq: compact set}  K:= \{ (x,z,t)\in U\times {\bf R}  \ | \ (\Omega(x), \phi^ {-1} (z) )_e \leq C \text{ and } |t|\leq T\} ,\end{equation}
for some constants \(C,T\). We must prove that there is only a finite number of elements \(g\) of \(G\) such that \(gK\cap K\neq \emptyset\). Let \(g\in G\), and suppose that for some \((x,z,t)\in K\) we have \( g(x,z,t)\in K\). Since both 
\((\Omega(x), z)_e\) and \( (g \Omega(x), g\phi^{-1} (z))_e\) are bounded by \(C\),  \(g\) must lie at a distance from a geodesic between \(\phi^{-1} (z) \) and \(\Omega(x)\) bounded by some constant depending only on \(C\) and the constants of hyperbolicity of \((G,d)\).  But a geodesic from \(\phi^{-1} (z)\) to \(\Omega(x)\) lies at a finite distant from a \(\underline{c}\)-line of expansion between \(\phi^{-1}(z)\) and \(\Omega(x)\) relative to \(x\), the Hausdorff distance being bounded by some constant depending only on \(\underline{c}\) and \(G,S\). Hence, we have \( |\log |Dg(x)| | \geq \text{cst} \norm{g} + \text{cst}\) for some positive constants depending only on \(\underline{c}\) and \(G\). However \( |\log |Dg(x)| |\leq 2T\) since both \((x,z,t)\) and \(g(x,z,t)\) lie in \(K\). This proves that the norm of \(g\) is bounded, hence there is only a finite number of \(g\in G\) such that \(gK\cap K\neq \emptyset\), and the properness of the action of \(G\) on \(U\times {\bf R}\) follows.

\vspace{0.1cm} 

\textit{Cocompactness.} The cocompactness of the action of \(G\) on \(U\times {\bf R}\) follows from cohomological reasons, but it is instructive to prove it by hands. We will prove that any point \((x,z,t)\in U\times {\bf R}\) can be sent by an element of \(G\) to a point \((x'',z'',t'')\) belonging to some compact set defined by equation \eqref{eq: compact set}, for some constants \(C,T\) that depend only on \(G\).

Let us first find \( g\in G\) such that \(\rho(g) (\Phi( x)) \) and \( \rho(g) (z)\) are separated by a constant that depends only on \(G\). Let \( \{g_n\}_{n\geq 0}\) be a geodesic ray on \(G\) tending to \(\Omega(x)\). A consequence of Lemma \ref{l: distortion} is that, if \(n\) is large enough so that \( | Dg_n(x) | \) is larger than the inverse of the distance between \(x\) and \(\Phi^{-1} (z)\), the distance between \(g_n(x) \) and the set \(\Phi^{-1} (\rho(g_n) z)\) is larger than some constant depending only on \(G\). In particular, the points \(\Phi (g_n(x) ) \) and \( \rho(g_n) (z)\) are separated by a constant that depends only on \(G\). Similarly, suppose that a geodesic ray \(\{g_n\}_{n\geq 0} \) tends to \(\phi^{-1} (z)\) when \(n \) tends to infinity. Then, the sequence \(\{g_n \}_{n\geq 0}\) lies at a finite distant from a line of expansion at \(z\) (for the \(\rho\)-action of \(G\) on \({\bf R}P^1\) given by \(\rho\), which is locally discrete and expansive), and the same argument applies: namely, for \(n\) large enough, the distance between \(\rho(g_n)(\Omega(x))\) and \(\rho(g_n) (z)\) is bounded from below by some constant depending only on \(G\). So we are done.

Now write \( (x',z',t') = g(x,z,t) \), with \( \Omega(x') \) and \(z'\) separated by a constant depending only on \(G\). If \(t'\leq 0\), then the \(t''\)-variable of the point \( (x'', z'', t'') := E^{x'}_n (x',z',t') \) grows linearly with \(n\), while keeping the distance between \( \Phi (x'') \) and \( z''\) separated by a constant depending only on \(G\). Hence for some \(n\) the point \((x'', z'', t'')\) belongs to the compact set defined in equation  \eqref{eq: compact set} for some constants \(C,T\) depending only on \(G\). If \(t''>0\), let \(\{g_n\}_{n\geq 0}\) be a geodesic ray beginning at \(g_0=e\) and tending to \(\phi^{-1} (z')\) when \(n\) tends to \(\infty\). Because \(\Phi (x') \) and \(z'\) are separated by some constant depending only on \(G\), the Gromov product \( (\Omega(x'), \phi^{-1} (z') )_e\) is bounded by some constant depending only on \(G\) as well, and the claim 1 of the proof of the Proposition \ref{p: boundary circle} shows that \( \log |D g_n (x') | \) decreases linearly when \(n\) tends to \(\infty\). Letting \( (x'',z'',t''):= g_n(x',z',t')\), and reasoning as before, we infer that the Gromov product  \( ( \Omega(x''), \phi^{-1} (z'') ) _e\) remains bounded by some constant depending only on \(G\), and for some \(n\), the \(t''\)-coordinates enters in some fixed interval of the form \([-T,T]\) for some constant \(T\) depending only on \(G\) as well. The conclusion follows. 

\vspace{0.2cm} 

\noindent {\bf Claim 1 --} \textit{The flow on \(M\) induced by the non singular vector fields \(V:=\frac{\partial}{\partial t}\) is Anosov, and its weak stable  foliation \(\mathcal F^+\) is given by the equation \(dx=0\).} 

\begin{proof}
Let \( \Delta\subset U\times {\bf R}\) be a fundamental domain for the \(G\)-action given by \eqref{eq: action}. Let \( (x,z,t)\in \Delta\) be some point, and \( s\) be a negative real number. There exists an element \(g_s\) of \(G\) such that \( g_s(x,z,t) \in \Delta\). This element \(g_s\) can be chosen as the \(n\)-th term of an expansive line at the point \(x\) beginning at \(e\), with \(n\) growing linearly with \(-s\) and constants depending only on \(G\). Hence \( \log |Dg_s (x)| \) (resp. \( \log |D\rho(g_s) (z)| \)) increases linearly to \(+\infty\) with \(-s\) (resp. decreases linearly to \(-\infty\) with \(-s\)). As a consequence: when \(s\) tends to \(+\infty\), the flow \(\exp (sV)\) contracts exponentially a metric on the bundle \( T\mathcal F^- /{\bf R} V\), where \(\mathcal F^-\) is the foliation  defined on the covering \(U\) by \(z=\text{cst}\), whereas it expands exponentially a metric on the foliation \(\mathcal F^{++}\) defined on the covering \(U\) by \( (x,t)=\text{cst}\). The conclusion follows.
\end{proof}

The weak unstable foliation \(\mathcal F^+\) defined on the covering \(U\) by \( x=\text{cst}\) is analytic, hence a Theorem of Ghys \cite[Th\'eor\`eme 4.1]{Ghys_rigidite} shows that  \(\mathcal F^+\) has an analytic transverse projective structure. This latter lifts to a transverse projective structure on the foliation of \(U\) defined by the submersion \( x: U\rightarrow {\bf S}^1\), which is invariant by \(G\), hence gives birth to an analytic projective structure on \({\bf S}^1\) which is invariant by \(G\). The proof of Proposition \ref{p: invariant projective structure} is complete. \end{proof}

\begin{corollary}\label{c: classification}
A finitely generated subgroup of \(\text{Diff}^\omega ({\bf S}^1) \) which  is expanding and locally discrete in the analytic category  is analytically conjugated to a uniform lattice in \( \widetilde{\text{PGL}}^k _2({\bf R}) \) acting on the \(k\)-th covering of \({\bf R}P^1\) for a certain integer \(k>0\).
\end{corollary}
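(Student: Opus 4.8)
The plan is to deduce Corollary~\ref{c: classification} from Proposition~\ref{p: invariant projective structure} together with the topological classification already obtained in Corollary~\ref{c: conjugation Fuchsian group}. By Proposition~\ref{p: invariant projective structure}, the group $G$ preserves an analytic $\mathbf{R}P^1$-structure on $\mathbf{S}^1$; equivalently, there is an analytic local diffeomorphism, the developing map $D:\widetilde{\mathbf{S}^1}\to \mathbf{R}P^1$ (defined on the universal cover of $\mathbf{S}^1$), together with a holonomy representation $\bar\rho:G\to\mathrm{PGL}_2(\mathbf{R})$ such that $D(g\cdot \tilde x)=\bar\rho(g)\,D(\tilde x)$ for every $g\in G$ and every lift. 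The first step is to analyse this developing map. Since $\mathbf{S}^1$ is compact, $D$ is a covering onto its image composed with... more precisely, the image of $D$ is an open $\bar\rho(G)$-invariant subset of $\mathbf{R}P^1$, so its complement is a closed invariant subset; by Corollary~\ref{c: conjugation Fuchsian group} the $G$-action is minimal (the boundary action of a non-virtually-cyclic hyperbolic group is minimal), hence the holonomy image acts minimally on the image of $D$, forcing the image to be all of $\mathbf{R}P^1$. Thus $D$ is an analytic covering map of a $k$-fold cyclic cover $\widetilde{\mathbf{R}P^1}^k\to\mathbf{R}P^1$, for some $k\geq 1$, and is equivariant with respect to $\bar\rho$ lifted to $\widetilde{\mathrm{PGL}_2^k(\mathbf{R})}$.

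Next I would identify $k$ with the degree appearing in Corollary~\ref{c: conjugation Fuchsian group} and verify compatibility. The developing map, being an analytic equivariant covering $\mathbf{S}^1 \to \widetilde{\mathbf{R}P^1}^k$ (after passing to the cyclic cover to which $D$ factors), conjugates the $G$-action on $\mathbf{S}^1$ analytically to an action on $\widetilde{\mathbf{R}P^1}^k$ by elements of $\widetilde{\mathrm{PGL}_2^k(\mathbf{R})}$; concretely one transports the $\mathbf{R}P^1$-structure through $D$ so that $D$ becomes the identity in the projective charts, and then each $g\in G$, being projective and preserving this structure, is an element of $\widetilde{\mathrm{PGL}_2^k(\mathbf{R})}$. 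The image $\bar G\subset \widetilde{\mathrm{PGL}_2^k(\mathbf{R})}$ is discrete: if not, one could find a sequence of nontrivial elements converging to the identity, which would contradict local discreteness in the analytic category (projective transformations close to the identity extend holomorphically and are close to the identity on a neighborhood in $\mathbf{C}/\mathbf{Z}$). Finally $\bar G$ is a uniform lattice: its quotient $\bar G\backslash \widetilde{\mathbf{R}P^1}^k$ is, via the conjugacy, the (compact) quotient $G\backslash\mathbf{S}^1$ together with the cyclic covering data; alternatively, cocompactness follows from the fact established in Corollary~\ref{c: conjugation Fuchsian group} that the associated Fuchsian group is cocompact, and a discrete subgroup of $\widetilde{\mathrm{PGL}_2^k(\mathbf{R})}$ is a uniform lattice iff its projection to $\mathrm{PGL}_2(\mathbf{R})$ is.

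I would therefore organize the proof as: (i) extract from Proposition~\ref{p: invariant projective structure} the developing pair $(D,\bar\rho)$; (ii) show $D$ is surjective onto $\mathbf{R}P^1$ using minimality of the $G$-action on $\mathbf{S}^1$ (which holds since $G$ is not virtually cyclic by expansiveness, so its boundary action, to which the $\mathbf{S}^1$-action is topologically conjugate up to finite cover by Proposition~\ref{p: boundary circle}, is minimal), deduce that $D$ is an analytic covering of some cyclic cover $\widetilde{\mathbf{R}P^1}^k$; (iii) push the projective structure forward by $D$ to get an analytic conjugacy of the $G$-action to a subgroup $\bar G\subset \widetilde{\mathrm{PGL}_2^k(\mathbf{R})}$; (iv) prove $\bar G$ is discrete using analytic local discreteness; (v) prove $\bar G$ is a uniform lattice using cocompactness of the underlying Fuchsian group from Corollary~\ref{c: conjugation Fuchsian group}.

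The main obstacle is step (ii)--(iii): controlling the global behaviour of the developing map. A priori an invariant projective structure only gives a local diffeomorphism $D$, and one must rule out that $D$ has complicated monodromy or fails to be a covering; the key point is that analytic local diffeomorphisms of the compact manifold $\mathbf{S}^1$ into a curve are rather rigid, and the $G$-equivariance plus minimality pin down the image and force $D$ to factor through a finite cyclic cover as an honest covering map. A secondary subtlety is matching the integer $k$ from the projective structure with the covering degree $k$ from Corollary~\ref{c: conjugation Fuchsian group} and checking orientation conventions, but this is bookkeeping rather than a genuine difficulty.
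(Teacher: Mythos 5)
Your overall strategy (extract the developing map of the invariant projective structure from Proposition~\ref{p: invariant projective structure}, show it descends to a finite covering of \({\bf R}P^1\), and identify the holonomy image with a cocompact lattice) is the same as the paper's, but there is a genuine gap exactly at the step you label as the main obstacle, and it is not "bookkeeping". The developing map is a local diffeomorphism \(D:{\bf R}\rightarrow {\bf R}P^1\) satisfying \(D(\tilde x+1)=A\,D(\tilde x)\) for some monodromy element \(A\in \text{PGL}_2({\bf R})\), and \(D\) descends to a (finite cyclic cover of a) covering \({\bf S}^1\rightarrow {\bf R}P^1\) only if \(A\) is trivial. Your proposal never mentions \(A\). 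Your inference "image of \(D\) is all of \({\bf R}P^1\), thus \(D\) is an analytic covering map of a \(k\)-fold cyclic cover" is false for general projective structures on the circle: lifting \(D\) to an increasing map \(\widetilde D:{\bf R}\rightarrow {\bf R}\) (universal cover of \({\bf R}P^1\)), surjectivity of \(D\) only says the image of \(\widetilde D\) has length at least that of a fundamental domain, while being a covering requires \(\widetilde D\) to be onto \({\bf R}\); when \(A\) is hyperbolic or parabolic the image of \(\widetilde D\) is typically a bounded or half-bounded \(\widetilde A\)-invariant interval and \(D\) is not a covering of anything. Even your surjectivity argument is shaky: minimality of the \(G\)-action on \({\bf S}^1\) does not directly give minimality of the holonomy image \(\bar\rho(G)\) on the image of \(D\), because a closed invariant subset of the image pulls back to a subset of \({\bf R}\) that is \(\widetilde G\)-invariant but need not be invariant under \(\tilde x\mapsto \tilde x+1\) unless it is also \(A\)-invariant.

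The paper's proof spends essentially all of its effort on this point: it shows \(A=\text{Id}\) by a two-case dynamical argument. If \(A\) has no fixed point on \({\bf R}P^1\) (elliptic case), the holonomy commutes with \(A\), hence is conjugate into \(\text{PO}_2({\bf R})\) and preserves a measure with analytic density, whose pullback by \(D\) descends to a \(G\)-invariant analytic-density measure on \({\bf S}^1\), contradicting expansion. If \(A\neq\text{Id}\) has a fixed point, then \(D^{-1}(\fix(A))\) descends to a finite \(G\)-orbit on \({\bf S}^1\), contradicting minimality of the boundary action via \(\Omega\). Only then does \(D\) descend to a genuine finite covering and the conclusion follow. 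You would need to supply this monodromy analysis (or an equivalent) for your steps (ii)--(iii) to work; the remaining steps (iv)--(v) are fine in outline, except that your first justification of cocompactness (identifying \(\bar G\backslash \widetilde{{\bf R}P^1}^k\) with a compact quotient) confuses the non-proper action on the circle with the action on \(\widetilde{\text{PGL}}_2^k({\bf R})\) or on the hyperbolic plane; your alternative justification via Corollary~\ref{c: conjugation Fuchsian group} is the right one.
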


\begin{proof}  By Proposition \ref{p: invariant projective structure}, the group \(G\) preserves an analytic  \({\bf R}P^1\)-structure on \({\bf S}^1\). Let  \( D : {\bf R} \rightarrow {\bf R}P^1\) be a developing map for this structure: this is an analytic local diffeomorphism which is equivariant with respect to an element \(A\in \text{PGL}_2({\bf R})\), namely we have 
\begin{equation}\label{eq: equivariance} D(\tilde{x} +1) = A D(\tilde{x}) \text{ for every } \tilde{x} \in {\bf R}.\end{equation}
The group \(G\) lifts to a subgroup  \(\widetilde{G}\subset \text{Diff}^\omega ({\bf R})\) that commutes with the translation \( \tilde{x}\mapsto \tilde{x}+1\).  Since the \({\bf R}P^1\)-structure is invariant by \(G\), there exists a representation \( \tilde{\rho'}: \widetilde{G}\rightarrow \text{PGL}_2({\bf R}) \) which is such that 
\begin{equation}\label{eq: invariance of projective structure} D\circ \tilde{g}= \tilde{\rho'} (\tilde{g} ) \circ D.\end{equation}

We then have \( A=\tilde{\rho'} (\tilde{x}\mapsto \tilde{x}+1) \). Hence \(A\) commutes with \(\tilde{\rho'}\left( \widetilde{G}\right) \). We will prove that \(A=Id\). Assume by contradiction that \(A\) has no fixed point on \({\bf R}P^1\). In this case \(\widetilde{G}\) is contained in a conjugate of \(\text{PO}_2({\bf R})\), and in particular preserves a measure on \({\bf R}P^1\) with an analytic density. Its preimage by \(D\) is invariant by \(\widetilde{G}\), in particular by the translation \(\tilde{x}\mapsto \tilde{x} +1\), hence produces on \({\bf S}^1\) a measure invariant by \(G\) that has an analytic density as well. This contradicts the expanding property for \(G\). Assume by contradiction that \(A\) has some fixed point but is not the identity. Then \(D^{-1} (\text{Fix} (A) ) \) is a discrete \(\widetilde{G}\)-invariant subset of \({\bf R}\), and projects in \({\bf S}^1\) to a \(G\)-invariant finite orbit. This is a contradiction since the image of this latter by the map \(\Omega\) would  be a finite \(G\)-orbit in \(\partial G\). The only remaining possibility is that \(A= Id\) as claimed. 

In particular, the representation \(\tilde{\rho'}\) induces a representation \(\rho' : G\rightarrow \text{PGL} _2 ({\bf R})\),  and \eqref{eq: equivariance} shows that  \(D\) induces a finite covering from \({\bf S}^1\) to \({\bf R}P^1\) which is \(\rho'\)-equivariant. Since \(G\) is a finitely generated, locally discrete and expanding subgroup of \(\text{Diff}^\omega ({\bf S}^1)\), the same is true for the image of \(\rho'\), hence this latter is a uniform lattice in \(\text{PGL}_2 ({\bf R})\). The result follows.\end{proof}


\begin{thebibliography}{[ABCD]}

\bibitem {AFKMMNT}{\sc S. Alvarez, D. Filimonov, V. Kleptsyn, D. Malicet, C. M. Cot\'on,  A. Navas, M. Triestino.} Groups with infinitely many ends acting analytically on the circle. arXiv:1506.03839

\bibitem {CJ} {\sc A. Casson, D. Jungreis.} Convergence groups and Seifert fibered 3-manifolds. {\em Inventiones Mathematicae} 118 (1994), no. 3, 441--456.

\bibitem{DFKN}{\sc B. Deroin, D. Filimonov, V. Kleptsyn, A. Navas.} A paradigm for codimension one foliations. Geometry, dynamics, and foliations 2013, 59--69, Adv. Stud. Pure Math., 72, Math. Soc. Japan, Tokyo, 2017.

\bibitem {Gabai}{\sc D. Gabai.} Convergence groups are Fuchsian groups. {\em Annals of Mathematics} 136 (1992), no. 3, 447--510.

\bibitem {Ghys_rigidite} {\sc \'E. Ghys.} Rigidit\'e diff\'erentiable des groupes Fuchsiens. {\em Publ. Math. IHES} Tome 78 (1993), p.163-185. 


\bibitem {GdH} {\sc \'E. Ghys \& P. de la Harpe.} Sur les groupes hyperboliques d'apr\`es M. Gromov. Progress in Math. 83  1990 Birkh\"auser Boston

\bibitem {Hector1} {\sc G. Hector.} Actions de groupes de diff\'eomorphismes de \([0,1]\). G\'eom\'etrie diff\'erentielle, Colloque, Santiago de Compostela, 1972; Lecture Notes in Math. no 392, pp. 14--22. Springer, Berlin. 


\bibitem {Koebe} {\sc P. Koebe.} \"Uber die Uniformisierung der algebraischen Kurven. I. {\em Math. Ann.} {\bf 67} (1909), no. 2, p. 145-224. 


\bibitem {Nekrashevych1}{\sc V. Nekrashevych.} Hyperbolic groupoids and duality. {\em Memoirs of the AMS} (2015)



\end{thebibliography}
\end{document}